\long\def\metanote#1#2{{\color{#1}\
\ifmmode\hbox\fi{\sffamily\mdseries\upshape [#2]}\ }}
\newcommand{\ra}{\rightarrow}
\newcommand{\be}{\begin{equation}}
\newcommand{\ee}{\end{equation}}
\newcommand{\bi}{\begin{itemize}}
\newcommand{\ei}{\end{itemize}}
\newcommand{\commentout}[1]{}
\newcommand{\bfQ}{{\bf{Q}}}
\newcommand{\TV}{\text{TV}}
\newcommand{\Wah}{\text{W}}
\newcommand{\Law}{{\mathcal{L}}}
\newcommand{\calM}{{\mathcal{M}}}
\newcommand{\Leb}{{\text{Leb}}}
\newcommand{\calB}{{\mathcal{B}}}
\newcommand{\calD}{{\mathcal{D}}}
\newcommand{\calO}{{\mathcal{O}}}
\newcommand{\calP}{{\mathcal{P}}}
\newcommand{\Nm}{{\mathbb{N}}}
\newcommand{\Rm}{{\mathbb R}}
\newcommand{\Pm}{{\mathbb P}}
\newcommand{\expE}{{\mathbb E}}
\newcommand{\Ind}{{\mathbbm{1}}}
\newtheorem{theo}{Theorem}[section]
\newtheorem{lem}[theo]{Lemma}
\newtheorem{defin}[theo]{Definition}
\newtheorem{prop}[theo]{Proposition}
\newtheorem*{rep@theorem}{\rep@title}
\newcommand{\newreptheorem}[2]{%
\newenvironment{rep#1}[1]{%
 \def\rep@title{#2 \ref{##1}}%
 \begin{rep@theorem}}%
 {\end{rep@theorem}}}
\long\def\metanote#1#2{{\color{#1}\
\ifmmode\hbox\fi{\sffamily\mdseries\upshape [#2]}\ }}
\begin{document}
\setcounter{page}{1}

\title{Selection principle for the Fleming-Viot process with drift $-1$}
\author{Oliver Tough\footnote{Department of Mathematical Sciences, University of Bath.}}

\date{June 5, 2023}

\maketitle

\begin{abstract}
We consider the Fleming-Viot particle system consisting of $N$ identical particles evolving in $\mathbb{R}_{>0}$ as Brownian motions with constant drift $-1$. Whenever a particle hits $0$, it jumps onto another particle in the interior. It is known that this particle system has a hydrodynamic limit as $N\rightarrow\infty$ given by Brownian motion with drift $-1$ conditioned not to hit $0$. This killed Brownian motion has an infinite family of quasi-stationary distributions (QSDs), with a Yaglom limit given by the unique QSD minimising the survival probability. On the other hand, for fixed $N<\infty$, this particle system converges to a unique stationary distribution as time $t\rightarrow\infty$. We prove the following selection principle: the empirical measure of the $N$-particle stationary distribution converges to the aforedescribed Yaglom limit as $N\rightarrow\infty$. The selection problem for this particular Fleming-Viot process is closely connected to the microscopic selection problem in front propagation, in particular for the $N$-branching Brownian motion. The proof requires neither fine estimates on the particle system nor the use of Lyapunov functions.
\end{abstract}

\section{Introduction and main result}\label{section:introduction}

The long-term behaviour of Markov processes with an absorbing boundary has been studied since the work of Yaglom \cite{Yaglom1947} on subcritical Galton-Watson processes. The limits we obtain are known as quasi-stationary distributions (QSDs). Given an absorbed Markov process $(X_t)_{0\leq t<\tau_{\partial}}$, QSDs are probability measures $\pi$ such that
\begin{equation}
\Law_{\pi}(X_t\lvert \tau_{\partial}>t)=\pi\quad\text{for all}\quad t\geq 0,\quad\text{so that}\quad\Pm_{\pi}(X_t\in \cdot, \tau_{\partial}>t)=e^{-\lambda(\pi)t}\pi,
\end{equation}
for some constant $\lambda(\pi)\geq 0$. We refer to the constant $\lambda(\pi)$ as the eigenvalue of $\pi$, since QSDs are left eigenmeasures of the infinitesimal generator of eigenvalue $-\lambda(\pi)$ (see \cite[Proposition 4, p.349]{Meleard2011}). A survey of QSDs is provided by \cite{Meleard2011}, due to M\'{e}l\'{e}ard and Villemonais. For processes on a bounded domain, it is typical for there to be a unique QSD, because the transition semigroup is typically compact and irreducible. For processes in an unbounded domain, however, it is typical for there to be infinitely many QSDs.

In this article we focus on Brownian motion on the positive half-line $\Rm_{>0}$ with constant drift $-1$, killed instantaneously at $0$,
\begin{equation}\label{eq:BM with drift -1}
dX_t=-dt+dW_t,\quad 0\leq t<\tau_{\partial}:=\inf\{t>0:X_{t-}=0\}.
\end{equation}

Mart{\'{i}}nez, Picco and San Mart{\'{i}}n \cite{Martinez1994} provided the following classification of the quasi-stationary distributions of $(X_t)_{0\leq t<\tau_{\partial}}$. The QSDs of $(X_t)_{0\leq t<\tau_{\partial}}$ are given by a one-parameter family of probability measures $(\pi_{\lambda})_{0<{\lambda}\leq \frac{1}{2}}$, with associated eigenvalues $\lambda(\pi_{\lambda})=\lambda$, given by
\begin{equation}\label{eq:QSDs of drift -1}
\pi_{\lambda}(dx)=\begin{cases}
M_{\frac{1}{2}}xe^{- x}dx,\quad \lambda=\frac{1}{2}\\
M_{\lambda} e^{-x}\sinh(\sqrt{1-2\lambda}x)dx,\quad 0<\lambda<\frac{1}{2}
\end{cases},\quad\text{for all}\quad 0<\lambda\leq \frac{1}{2}.
\end{equation}
In the above $M_{\lambda}$ for $0<{\lambda}\leq \frac{1}{2}$ are normalisation constants. We refer to $\pi_{\frac{1}{2}}$ as the \text{minimal QSD}, and will denote it as $\pi_{\min}$. In general, a QSD with eigenvalue $\lambda(\pi)$ will have killing time given by $\Law_{\pi}(\tau_{\partial})=\text{exp}(\lambda(\pi))$, so that $\expE_{\pi}[\tau_{\partial}]=\frac{1}{\lambda(\pi)}$. The QSD maximising $\lambda(\pi)$, in this case $\pi_{\min}$, is called \textit{minimal} because it minimises $\expE_{\pi}[\tau_{\partial}]=\frac{1}{\lambda(\pi)}$.

Mart{\'{i}}nez, Picco and San Martin \cite{Martinez1998} established a sufficient condition for a probability measure to be in the domain of attraction of the minimal QSD for \eqref{eq:BM with drift -1}, $\pi_{\min}$. The following, proven in the appendix, establishes that this is necessary and sufficient.
\begin{theo}[Theorem 1.3, \cite{Martinez1998}]\label{theo:domain of attraction of minimal QSD}
We suppose that $\mu$ is a probability measure on $\Rm_{>0}$. Then the following are equivalent:
\begin{align}
\Law_{\mu}(X_t\lvert \tau_{\partial}>t)\overset{\TV}{\ra} \pi_{\min}\quad\text{as}\quad t\ra\infty,\label{eq:convergence to minimal QSD}\\
\limsup_{t\ra\infty}\frac{1}{t}\ln\Pm_{\mu}(\tau_{\partial}>t)\leq -\lambda_{\min}=\frac{1}{2},\label{eq:limsup of survival prob bded}\\
\limsup_{x\ra \infty}\frac{1}{x}\ln\mu([x,\infty))\leq -1,\label{eq:Martinez condition for domain of attraction}\\
\int_{\Rm_{>0}}e^{ux}\mu(dx)<\infty\quad\text{for all} \quad u<1.\label{eq:condition for domain of attraction of minimal QSD}
\end{align}
\end{theo}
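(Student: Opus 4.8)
The plan is to prove the cycle $\eqref{eq:convergence to minimal QSD}\Rightarrow\eqref{eq:limsup of survival prob bded}\Rightarrow\eqref{eq:Martinez condition for domain of attraction}\Rightarrow\eqref{eq:condition for domain of attraction of minimal QSD}\Rightarrow\eqref{eq:convergence to minimal QSD}$, the last implication being the ``sufficient condition'' already established in \cite{Martinez1998}. The equivalence $\eqref{eq:Martinez condition for domain of attraction}\Leftrightarrow\eqref{eq:condition for domain of attraction of minimal QSD}$ is purely measure-theoretic: if $\int e^{ux}\mu(dx)=:C_u<\infty$ for all $u<1$, then Markov's inequality gives $\mu([x,\infty))\leq C_ue^{-ux}$ and hence $\limsup_{x\to\infty}x^{-1}\ln\mu([x,\infty))\leq -u$ for every $u<1$; conversely, if $\mu([x,\infty))\leq e^{-(1-\eps)x}$ for $x\geq X_\eps$, then Fubini's theorem bounds $\int e^{ux}\mu(dx)$ by $e^{uX_\eps}+u\int_{X_\eps}^{\infty}e^{-(1-\eps-u)y}\,dy<\infty$ whenever $u<1-\eps$.

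For $\eqref{eq:convergence to minimal QSD}\Rightarrow\eqref{eq:limsup of survival prob bded}$ I use only the Markov property and the defining identity of a QSD. Applying the Markov property at the deterministic time $t$,
\[
\Pm_\mu(\tau_\partial>t+s)=\Pm_\mu(\tau_\partial>t)\int_{\Rm_{>0}}\Pm_x(\tau_\partial>s)\,\Law_\mu(X_t\in dx\mid\tau_\partial>t).
\]
Since $x\mapsto\Pm_x(\tau_\partial>s)$ is bounded and measurable, by \eqref{eq:convergence to minimal QSD} and total-variation convergence the integral tends, as $t\to\infty$, to $\int\Pm_x(\tau_\partial>s)\,\pi_{\min}(dx)=\Pm_{\pi_{\min}}(\tau_\partial>s)=e^{-s/2}$, the last step being the QSD identity with $\lambda(\pi_{\min})=\tfrac12$. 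Fixing $s>0$ and $\eps>0$ we obtain $\Pm_\mu(\tau_\partial>t+s)\leq(1+\eps)e^{-s/2}\Pm_\mu(\tau_\partial>t)$ for all large $t$; iterating this and using that $t\mapsto\Pm_\mu(\tau_\partial>t)$ is nonincreasing yields $\limsup_{t\to\infty}t^{-1}\ln\Pm_\mu(\tau_\partial>t)\leq s^{-1}\ln\big((1+\eps)e^{-s/2}\big)$, and letting $\eps\downarrow0$ gives \eqref{eq:limsup of survival prob bded}.

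The crucial implication is $\eqref{eq:limsup of survival prob bded}\Rightarrow\eqref{eq:Martinez condition for domain of attraction}$, which I argue contrapositively. If \eqref{eq:Martinez condition for domain of attraction} fails there are $\alpha\in(0,1)$ and $x_n\uparrow\infty$ with $\mu([x_n,\infty))\geq e^{-\alpha x_n}$ (using $\mu([x,\infty))\leq1$ to arrange $\alpha<1$). I will use the explicit transition density obtained from Girsanov's transform, which converts \eqref{eq:BM with drift -1} into killed driftless Brownian motion: $p^{-1}_t(x,y)=e^{x-y-t/2}p^0_t(x,y)$, where $p^0_t(x,y)=\tfrac{2}{\sqrt{2\pi t}}e^{-(x^2+y^2)/2t}\sinh(xy/t)$ is the reflection-principle kernel. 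Since $\Pm_{ct}(\tau_\partial>t)=e^{ct-t/2}\int_0^\infty e^{-y}p^0_t(ct,y)\,dy$, restricting this integral to $y\in(0,1)$ gives, for $0<c<1$ and $t\geq1$, the lower bound $\Pm_{ct}(\tau_\partial>t)\geq\kappa(c)\,t^{-1/2}e^{-(1-c)^2t/2}$ with $\kappa(c)>0$. As $x\mapsto\Pm_x(\tau_\partial>t)$ is nondecreasing (couple the starting points via a common Brownian driver), taking $t_n:=x_n/(1-\alpha)$ gives
\[
\Pm_\mu(\tau_\partial>t_n)\ \geq\ \Pm_{x_n}(\tau_\partial>t_n)\,\mu([x_n,\infty))\ \geq\ \kappa(1-\alpha)\,t_n^{-1/2}\,e^{-\alpha^2t_n/2}\,e^{-\alpha x_n}\ =\ \kappa(1-\alpha)\,t_n^{-1/2}\,e^{-(\alpha-\alpha^2/2)t_n},
\]
whence $\limsup_{t\to\infty}t^{-1}\ln\Pm_\mu(\tau_\partial>t)\geq-(\alpha-\tfrac12\alpha^2)>-\tfrac12$, since $\alpha(1-\tfrac\alpha2)<\tfrac12$ for $\alpha\in(0,1)$; this contradicts \eqref{eq:limsup of survival prob bded}.

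The only genuinely delicate ingredient is the last implication $\eqref{eq:condition for domain of attraction of minimal QSD}\Rightarrow\eqref{eq:convergence to minimal QSD}$, that is, the Yaglom-type convergence itself, which I take from \cite{Martinez1998}. Should one wish a self-contained argument, it can be recovered from the same identity $p^{-1}_t(x,y)=e^{x-y-t/2}p^0_t(x,y)$ by a Laplace-type analysis: under \eqref{eq:condition for domain of attraction of minimal QSD} the tilted measure $e^{x-x^2/2t}\mu(dx)$ concentrates on scales $x=o(t)$, so that $\sinh(xy/t)$ linearises to $xy/t$ on the relevant range and the conditioned density $\Law_\mu(X_t\in dy\mid\tau_\partial>t)\propto e^{-y}\int_0^\infty e^{x}p^0_t(x,y)\mu(dx)$ converges pointwise, hence, by Scheff\'{e}'s lemma, in total variation, to $M_{1/2}\,ye^{-y}=\pi_{\min}$. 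Controlling the tail of this tilted measure under the sharp moment condition \eqref{eq:condition for domain of attraction of minimal QSD} is the step requiring the most care.
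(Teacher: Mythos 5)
Your proposal is correct, and it closes the equivalence by a cycle that differs from the paper's in one substantive place. The paper proves \eqref{eq:limsup of survival prob bded}$\Rightarrow$\eqref{eq:condition for domain of attraction of minimal QSD} directly: the survival bound yields $\expE_{\mu}[e^{z\tau_{\partial}}]<\infty$ for $z$ just below $\tfrac{1}{2}$, and the explicit hitting-time Laplace transform $\expE_x[e^{z\tau_{\partial}}]=e^{-x(\sqrt{1-2z}-1)}$ from \cite[(1.4)]{Martinez1998} converts this into the exponential moment condition. You instead prove \eqref{eq:limsup of survival prob bded}$\Rightarrow$\eqref{eq:Martinez condition for domain of attraction} contrapositively, via the Girsanov identity $p^{-1}_t(x,y)=e^{x-y-t/2}p^0_t(x,y)$ and the Gaussian lower bound $\Pm_{ct}(\tau_{\partial}>t)\geq\kappa(c)t^{-1/2}e^{-(1-c)^2t/2}$, optimised at $c=1-\alpha$ so that a heavy tail $\mu([x_n,\infty))\geq e^{-\alpha x_n}$ forces a survival rate $\alpha-\tfrac{1}{2}\alpha^2<\tfrac{1}{2}$; I checked the exponent arithmetic and it is right. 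Your route is more self-contained (it needs only the heat kernel and a monotone coupling rather than the cited Laplace-transform identity), at the cost of being longer. The remaining steps --- \eqref{eq:convergence to minimal QSD}$\Rightarrow$\eqref{eq:limsup of survival prob bded} via the Markov property and total-variation convergence (the paper telescopes with step $s=1$; your version with general $s$ and an $(1+\epsilon)$ iteration is equivalent), the elementary equivalence \eqref{eq:Martinez condition for domain of attraction}$\Leftrightarrow$\eqref{eq:condition for domain of attraction of minimal QSD}, and the deferral of the hard Yaglom implication to \cite{Martinez1998} --- match the paper's treatment.
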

In particular, we have the convergence in \eqref{eq:convergence to minimal QSD} whenever the initial condition $\mu$ is a point mass, meaning that $\pi_{\min}$ is a \textit{Yaglom limit} (see \cite[Definition 2]{Meleard2011}).

In this paper we study the behaviour of a system of interacting particles associated to $(X_t)_{0\leq t<\tau_{\partial}}$, known as a Fleming-Viot particle system. This is defined as follows. 
\begin{defin}[Fleming-Viot particle system]
We consider $N \geq 2$ particles diffusing in the domain $\Rm_{>0}$. The particle positions are denoted by $X_t^{N,1},\ldots,X_t^{N,N} \in \Rm_{>0}$, giving an $\Rm_{>0}^N$-valued stochastic process $\vec{X}^N_t:=(X_t^{N,1},\ldots,X_t^{N,N})$. We let $\upsilon^N$ be a probability measure on $\Rm_{>0}^N$, and let $\{ W^{N,i}_t\}_{i=1}^N$ be a collection of independent Brownian motions on $\Rm^d$. Then the particle system $\{X^{N,i}\}_{i=1}^N \subset \Rm_{>0}$ with initial distribution $\upsilon^N$ is defined by:
\begin{equation} \label{eq:N-particle system sde}
\left \{ \begin{split}
(i) & \quad \vec{X}^N_0 \sim \upsilon^N. \\
(ii) & \quad \text{For $t \in [0,\infty)$ and between jump times, the particles evolve according to}\\
&  \quad dX^{N,i}_t=-dt+dW^{N,i}_t,\quad i = 1,\dots,N. \\
(iii) & \quad \text{Whenever a particle $X^{N,i}$ hits the boundary $0$, i.e. $X^{N,i}_{t-}=0$, $X^{N,i}$ instantly jumps to the}\\
& \quad \text{location of another particle chosen independently and uniformly at random.}
\end{split}  \right.
\end{equation}
We denote by $\tau_n^N$ the $n^{\text{th}}$ jump time of the particle system for $n\geq 1$, with $\tau^N_0:=0$. We define
\begin{equation}\label{eq:jump emp meas processes}
J^N_t:=\frac{1}{N}\lvert \{n:\tau_n^N\leq t\}\rvert,\quad m^N_t:=\frac{1}{N}\sum_{i=1}^N\delta_{X^{N,i}_t}(\cdot).
\end{equation}
Thus $J^N_t$ is the number of jumps up to time $t$, renormalised by $\frac{1}{N}$, and $m^N_t $ is the empirical measure at time $t$.
\label{defin:Fleming-Viot MKV dynamics}
\end{defin}
In general, it is possible for the Fleming-Viot particle system to be ill-posed, due to the possibility of there being infinitely many jumps in a finite time, as in \cite{Bieniek2012}. Well-posedness is established in the present setting by \cite[Theorem 3.6]{Villemonais2011}.

Villemonais has established a hydrodynamic limit theorem under general conditions \cite[Theorem 2.2]{Villemonais2011}, which in this case provides the following.
\begin{theo}[Theorem 2.2, \cite{Villemonais2011}]
We consider a probability measure $\mu\in\calP(\Rm_{>0})$ and a sequence of $N$-particle Fleming-Viot processes $(\vec{X}^N_t)_{t\geq 0}$ such that $m^N_0\ra \mu$ weakly in probability as $N\ra\infty$. Then for all $t>0$ we have that $m^N_t\ra \Law_{\mu}(X_t\lvert \tau_{\partial}>t)$ weakly in probability as $N\ra\infty$.
\end{theo}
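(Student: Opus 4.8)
The plan is to follow the standard route for Fleming--Viot hydrodynamic limits: write down the semimartingale decomposition of the empirical measure, prove tightness of the empirical-measure processes together with the rescaled jump counters $J^N_\cdot$, characterise all subsequential limits via a weak formulation of the normalised Fokker--Planck equation, and identify that limit by a duality argument with the killed semigroup of \eqref{eq:BM with drift -1}. For the first step, fix $f\in C^2_b([0,\infty))$ and set $Lf:=\tfrac12 f''-f'$, the generator of \eqref{eq:BM with drift -1}. Applying It\^o's formula to each particle between jumps and bookkeeping the jumps — at the $n$-th jump the departing particle removes $\tfrac1N f(0)$ while, conditionally on the past, the arriving particle adds on average $\tfrac1{N-1}(\langle Nm^N_{\tau^N_n-},f\rangle-f(0))$ — one obtains
\begin{equation*}
\langle m^N_t,f\rangle=\langle m^N_0,f\rangle+\int_0^t\langle m^N_s,Lf\rangle\,ds+\int_0^t\big(\langle m^N_{s-},f\rangle-f(0)\big)\,dJ^N_s+M^{N,f}_t+R^{N,f}_t,
\end{equation*}
where $R^{N,f}$ gathers the $O(1/N)$ corrections (the $\tfrac{N}{N-1}$ versus $1$ factor and an $\tfrac1N f(0)$ term, so $|R^{N,f}_t|\le C_f(1+J^N_t)/N$) and $M^{N,f}$ is a martingale whose bracket is $\tfrac1{N^2}\sum_i\int_0^t f'(X^{N,i}_s)^2\,ds$ plus the jump fluctuations, so $\expE[(M^{N,f}_t)^2]\le C_f(t+\expE[J^N_t])/N$.

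Next I would prove that $(m^N_\cdot,J^N_\cdot)$ is tight in $D([0,T],\calP(\Rm_{\ge 0})\times\Rm_{\ge 0})$. The two ingredients are (a) a uniform bound $\sup_N\expE[J^N_T]<\infty$ on the jump rate, and (b) a tightness-at-infinity bound such as $\sup_N\expE[\langle m^N_T,\phi\rangle]<\infty$ for some $\phi\to\infty$, so that in the limit no mass escapes to $+\infty$ and no mass accumulates at $\{0\}$. Here the constant drift $-1$ is favourable for (b): particles are pushed towards $0$ and, on jumping, land on existing particles rather than being sent far out; for (a) one must control the boundary local time of the system, e.g.\ via the heuristic that per unit time the particles accumulate a total downward drift of order $N$ while each jump costs a fixed expected upward excursion, or by a comparison argument. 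Granting (a)--(b), the error terms in the decomposition vanish in probability uniformly on $[0,T]$, the Aldous criterion gives tightness of each $\langle m^N_\cdot,f\rangle$ and hence of $m^N_\cdot$, and since the jumps of $J^N$ have size $1/N$ any limit $\beta$ of $J^N_\cdot$ is continuous and non-decreasing with $\beta_0=0$. \emph{I expect this step to be the main obstacle}: a priori nothing rules out a cascade of near-instantaneous jumps when particles clump near $0$, so the estimate $\sup_N\expE[J^N_T]<\infty$ is the real content.

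Finally, let $(m,\beta)$ be any subsequential weak limit of $(m^N,J^N)$; using $m^N_0\to\mu$ and the continuity of $\beta$ to pass to the limit in the decomposition, $(m,\beta)$ satisfies, a.s.\ for every $f\in C^2_b([0,\infty))$ and $t\ge0$,
\begin{equation*}
\langle m_t,f\rangle=\langle\mu,f\rangle+\int_0^t\langle m_s,Lf\rangle\,ds+\int_0^t\big(\langle m_s,f\rangle-f(0)\big)\,d\beta_s,
\end{equation*}
with $m$ weakly continuous and $\calP(\Rm_{\ge 0})$-valued and $\beta$ continuous non-decreasing. To pin this down, fix $T>0$ and $f\in C_b(\Rm_{\ge0})$, let $(P_r)_{r\ge0}$ be the sub-Markov semigroup of \eqref{eq:BM with drift -1} killed at $0$, and set $g_s:=P_{T-s}f$; for $s<T$ one has $g_s\in C^2_b$, $g_s(0)=0$, and $\partial_sg_s+Lg_s=0$. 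Applying the weak equation with the time-dependent test function $g_s$ on $[0,T-\delta]$ (justified by a standard Riemann-sum argument, the dynamics of $m$ being of finite variation) and using $g_s(0)=0$ yields $d\langle m_s,g_s\rangle=\langle m_s,g_s\rangle\,d\beta_s$, hence $\langle m_{T-\delta},g_{T-\delta}\rangle=e^{\beta_{T-\delta}}\langle\mu,P_Tf\rangle$; letting $\delta\downarrow0$, using weak continuity of $s\mapsto m_s$, the tightness-at-infinity bound, and $P_\delta f\to f$, we get $\langle m_T,f\rangle=e^{\beta_T}\langle\mu,P_Tf\rangle$. Taking $f\equiv1$ forces $e^{\beta_T}=\Pm_\mu(\tau_\partial>T)^{-1}$, so
\begin{equation*}
\langle m_T,f\rangle=\frac{\langle\mu,P_Tf\rangle}{\Pm_\mu(\tau_\partial>T)}=\expE_\mu[f(X_T)\lvert\tau_\partial>T]=\langle\Law_\mu(X_T\lvert\tau_\partial>T),f\rangle .
\end{equation*}
Thus every subsequential limit coincides with the deterministic path $t\mapsto\Law_\mu(X_t\lvert\tau_\partial>t)$ (with $\beta_t=-\ln\Pm_\mu(\tau_\partial>t)$); being tight with a unique, deterministic limit point, $m^N_\cdot\to\Law_\mu(X_\cdot\lvert\tau_\partial>\cdot)$ weakly in probability in $D([0,\infty),\calP(\Rm_{\ge0}))$. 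Since this limit path is continuous, evaluation at a fixed $t>0$ is continuous on the Skorokhod space, which gives $m^N_t\to\Law_\mu(X_t\lvert\tau_\partial>t)$ weakly in probability, as claimed.
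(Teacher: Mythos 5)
The paper does not prove this statement: it is imported wholesale from \cite{Villemonais2011}, and the version actually invoked in the body of the paper is Theorem \ref{theo:tight ic hydrodynamic limit theorem}, quoted from \cite{Tough2022}. Measured against those proofs, your architecture is the standard and correct one: the semimartingale decomposition of $\langle m^N_t,f\rangle$ with the jump term $\int_0^t(\langle m^N_{s-},f\rangle-f(0))\,dJ^N_s$, tightness of the pair $(m^N_\cdot,J^N_\cdot)$, continuity of any limit $\beta$ of $J^N$, and identification of subsequential limits by testing the limiting weak equation against $g_s=P_{T-s}f$ (which vanishes at $0$) to get $\langle m_T,f\rangle=e^{\beta_T}\langle\mu,P_Tf\rangle$ and then $e^{\beta_T}=\Pm_\mu(\tau_\partial>T)^{-1}$. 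All of that is sound.

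The genuine gap is the one you flag yourself and then "grant": the uniform bound $\sup_N\expE[J^N_T]<\infty$, together with the companion control that no mass accumulates at $0$ or escapes to $+\infty$ in the limit (the latter is needed to take $f\equiv 1$ in the duality identity, since $P_Tf$ for $f\in C_b$ vanishing near $0$ does not see an atom at the boundary). This estimate is not a routine afterthought; it is the technical core of the theorem. Without it the remainder $R^{N,f}$ and the martingale bracket are not shown to vanish, $J^N$ is not tight, and the identification step has nothing to act on. The paper itself notes, citing \cite{Bieniek2012}, that Fleming--Viot systems can produce infinitely many jumps in finite time, so even finiteness of $J^N_T$ — let alone a bound uniform in $N$ — is genuinely at stake. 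Your heuristic (total downward drift of order $N$ versus a fixed expected cost per jump) fails exactly in the dangerous regime: when the particles clump near $0$, each jump is arbitrarily cheap, and nothing in the heuristic excludes a cascade. In \cite{Villemonais2011} the bound is obtained by coupling each particle from below with an independent reflected drifted Brownian motion and controlling excursions/local time; the appendix of the present paper (the proof of Lemma \ref{lem:bound on number of jumps while mass all at the boundary} and of $\lambda_N<\infty$, via blocks of four reflected processes and the supermartingale built from $(\sum_i(Y^i)^2)^{-1/2}$) implements a quantitative version of the same idea in the stationary regime. Some argument of this type must be supplied before your proposal is a proof; the rest is comparatively routine once it is in place.
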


If $\mu$ satisfies \eqref{eq:Martinez condition for domain of attraction} then Theorem \ref{theo:domain of attraction of minimal QSD} ensures that this limit, $\Law_{\mu}(X_t\lvert \tau_{\partial}>t)$, converges to the minimal QSD, $\pi_{\min}$, as $t\ra\infty$. On the other hand it is straightforward to prove the following.
\begin{theo}\label{theo:ergodicity for fixed N}
We fix $2\leq N<\infty$. Then there exists a unique stationary distribution $\psi^N\in \calP(\Rm_{>0}^N)$ such that 
\begin{equation}\label{eq:convergence in TV of Law fixed N}
\Law_{\upsilon^N}(X_t)\ra \psi^N\quad\text{in total variation as}\quad t\ra \infty,
\end{equation}
for any initial distribution $\vec{X}^N_0\sim \upsilon^N\in \calP(\Rm_{>0}^N)$.
Moreoer there exists a constant $\lambda_N\in (0,\infty]$ such that
\begin{equation}\label{eq:almost sure convergence of lambda N fixed N}
\frac{J^N_t}{t}\ra \lambda_N\quad\text{almost surely as}\quad t\ra\infty,
\end{equation}
for any initial distribution $\vec{X}^N_0\sim \upsilon^N\in \calP(\Rm_{>0}^N)$. Furthermore $\lambda_N\in (0,\infty)$ for $N\geq 12$. Finally, the convergence in \eqref{eq:almost sure convergence of lambda N fixed N} becomes $L^p$ convergence whenever the initial condition is deterministic and $1\leq p<\frac{1}{2}\lfloor \frac{N}{4}\rfloor$. 
\end{theo}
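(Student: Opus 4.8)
The four assertions decrease in softness, and I would treat them in turn.

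\textbf{Ergodicity.} The process $(\vec X^N_t)_{t\ge 0}$ is a well-defined strong Markov process by \cite[Theorem 3.6]{Villemonais2011}. The plan is to exploit that the drift $-1$ is confining. Writing $\bar X_t:=\max_i X^{N,i}_t$, a jump never increases $\bar X_t$ (a particle at $0$ jumps onto another particle, whose height is $\le\bar X_{t-}$), while on a jump-free interval $[r,t]$ each coordinate satisfies $X^{N,i}_t\le \bar X_r+(W^{N,i}_t-W^{N,i}_r)-(t-r)$. A straightforward comparison argument then shows that $\bar X_t$ is stochastically dominated, uniformly in $t\ge 0$, by $\bar X_0+\max_i\sup_{s\ge 0}(W^{N,i}_s-s)$, which is almost surely finite; hence $\{\Law_{\upsilon^N}(\vec X^N_t):t\ge 0\}$ is tight and a stationary distribution $\psi^N$ exists by Krylov--Bogolyubov. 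For uniqueness and total-variation convergence I would establish a local minorization: on the event that no particle jumps before time $1$, the sub-probability kernel $\vec x\mapsto \Pm_{\vec x}(\vec X^N_1\in\cdot,\ \text{no jump in }[0,1])$ is absolutely continuous with a jointly continuous, strictly positive density (a Gaussian-type lower bound for the diffusion conditioned to stay positive), hence bounded below by $\epsilon\,\mathrm{Unif}(B)$ on $C\times B$ for a fixed compact $C\subset(0,\infty)^N$ and a small ball $B$. Together with the tightness (which forces the process back into a suitable such $C$), this makes the time-$1$ skeleton $\varphi$-irreducible and aperiodic with $C$ petite; since a stationary probability measure exists, the process is positive Harris recurrent, and standard Markov-process ergodic theory (in the spirit of Harris' theorem) yields \eqref{eq:convergence in TV of Law fixed N}.

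\textbf{Almost sure convergence of $J^N_t/t$.} Under $\Pm_{\psi^N}$ the process is stationary, and it is ergodic because $\psi^N$ is the \emph{unique} stationary distribution. Consequently the sequence of per-unit-time jump counts $(\,|\{n:\tau^N_n\in(k-1,k]\}|\,)_{k\ge 1}$ is stationary and ergodic, so Birkhoff's theorem (in its form valid for nonnegative functionals of infinite mean) gives $J^N_t/t\to\lambda_N$ almost surely, where $N\lambda_N\in(0,\infty]$ is the expected number of jumps per unit time under $\psi^N$. This limit is strictly positive because $\psi^N$ charges neighbourhoods of the boundary (by irreducibility), from which a jump within time $1$ has positive probability; a priori it could equal $+\infty$. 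For a general initial law $\upsilon^N$ one couples with the stationary version using the minorization above; the coupling succeeds at an almost surely finite time, after which the two jump counts differ only by a fixed finite amount, leaving the limit unchanged. This proves \eqref{eq:almost sure convergence of lambda N fixed N}.

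\textbf{Finiteness of $\lambda_N$ and $L^p$ convergence.} Both remaining claims follow from the moment bound
\[
\sup_{t\ge 1}\expE_{\vec x}\big[(J^N_t/t)^{q}\big]<\infty\qquad\text{for every fixed }\vec x\in(0,\infty)^N\text{ and every }q<\tfrac12\big\lfloor\tfrac N4\big\rfloor ,
\]
since it renders $\{(J^N_t/t)^{p}\}_{t\ge 1}$ uniformly integrable for $1\le p<\tfrac12\lfloor N/4\rfloor$, upgrading the almost sure convergence to $L^p$ convergence; and when $N\ge 12$ this range includes $p=1$, whence $\expE_{\vec x}[J^N_t/t]\to\lambda_N$ and in particular $\lambda_N<\infty$. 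To prove the bound I would decompose the jump count particle by particle: if $s_i^{(1)}<s_i^{(2)}<\cdots$ are the times at which particle $i$ reaches $0$, then between consecutive such times particle $i$ runs as Brownian motion with drift $-1$ from its teleported height $y_i^{(k)}$, so the interval lengths $\sigma_i^{(k)}$ satisfy $\expE[\sigma_i^{(k)}\mid\mathcal F_{s_i^{(k)}}]=y_i^{(k)}$ with all moments polynomially controlled by $y_i^{(k)}$, while $\sum_k\sigma_i^{(k)}\le t$. Hence a large jump count forces many teleportation heights $y_i^{(k)}$ to be small, which in turn forces the uniformly-chosen target particles to sit close to the origin on many occasions. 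Controlling the probability of this is a counting problem over the subsets of particles that can be simultaneously near $0$: each near-boundary episode costs a bounded power of a small probability, and one can afford roughly $N/4$ of them before demanding a configuration of probability zero — which is the source of the exponent $\lfloor N/4\rfloor$. This estimate — essentially the assertion that the $N$-particle empirical measure cannot concentrate at the origin faster than a fixed polynomial rate, uniformly in time — is the only non-soft ingredient, and I expect it to be the main obstacle; every other step is standard Markov-process theory.
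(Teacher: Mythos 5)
There is a genuine gap at the very first step, in the claim that ``a straightforward comparison argument shows that $\bar X_t$ is stochastically dominated, uniformly in $t\ge 0$, by $\bar X_0+\max_i\sup_{s\ge 0}(W^{N,i}_s-s)$.'' This is false, and the reason it fails is precisely the difficulty of the problem. Although a jump does not increase the maximum at the jump instant, jumps allow the identity of the maximal particle to change, and the maximum can then be pushed up by a \emph{relay}: particle $i_1$ carries the maximum up by the increment of $W^{i_1}$ over some interval, particle $i_2$ dies and jumps onto $i_1$, then carries the maximum further up by an increment of $W^{i_2}$ measured from the jump time (not from time $0$), and so on. Each leg contributes up to the \emph{oscillation} of the corresponding Brownian motion, not its running maximum of $W^i_s-s$, and these contributions add. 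So no pathwise bound of the form you state holds; what one can extract (and what the paper's proof extracts, via a backward-tracing argument showing that such a relay chain over a fixed horizon $H$ cannot revisit an index and hence has length at most $N+2$) is $\bar X_H-\bar X_0\lesssim (N+5)\max_i\operatorname{osc}_{[0,H]}W^i-H$. Even this does not give uniform-in-$t$ stochastic boundedness directly, since oscillations over $[0,t]$ grow like $\sqrt t$; the paper instead converts it into a Foster--Lyapunov drift inequality $\mathbf{P}_HV\le c_0V+K$ for $V(\vec X)=e^{\gamma\max_iX^i}$ and a fixed large $H$, and then runs Harris' theorem with a minorization on the sublevel set of $V$ (your minorization step is essentially the right one, though one must handle the possibility of jumps rather than condition on their absence). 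You need to replace the false domination with such a drift argument; without it, existence of $\psi^N$ and everything downstream is unsupported.

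The second serious issue is that the estimate you yourself identify as ``the only non-soft ingredient'' --- the moment bound $\sup_{t\ge1}\expE[(J^N_t/t)^q]<\infty$ for $q<\frac12\lfloor N/4\rfloor$ --- is not proved but only described at the level of a counting heuristic, and the heuristic does not explain where the quarter comes from. In the paper the exponent arises from a concrete construction: the Fleming--Viot particles are coupled from below by $N$ \emph{independent} Brownian motions with drift $-1$ reflected at $0$; these are grouped into $\lfloor N/4\rfloor$ independent quadruples, and for each quadruple $\vec Y\in\Rm^4_{\ge0}$ one shows via It\^o's formula that $|\vec Y_t|^{-1}-C't$ is a supermartingale, giving $\Pm(\max\text{ of the quadruple}\le 2^{-k}\text{ at some time in }[1,2])\le C2^{-k}\wedge q$; independence across quadruples then yields $\Pm(\tau^N_k\le2)\le[C2^{-k}\wedge q]^{\lfloor N/4\rfloor}$, which is summed against a geometric bound $CN^22^{2k}$ on the number of deaths per unit time while the maximum exceeds $2^{-k}$. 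Your particle-by-particle decomposition via teleportation heights is a plausible alternative starting point, but as written the proposal asserts rather than establishes the anti-concentration estimate, so the finiteness of $\lambda_N$ and the $L^p$ convergence remain unproven. The remaining parts (Birkhoff plus coupling for the almost sure convergence of $J^N_t/t$, uniform integrability upgrading a.s.\ to $L^p$ convergence) match the paper and are fine modulo the two gaps above.
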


A proof of Theorem \ref{theo:ergodicity for fixed N} when the Fleming-Viot particle system is driven instead by a biased random walk on $\Nm$ is given in \cite{Asselah2012}. We supply a proof of Theorem \ref{theo:ergodicity for fixed N} in the appendix. 

\subsection{Main result}

We let $\mathcal{P}(\Rm_{>0})$ be the set of Borel probability measures on $\Rm_{>0}$, and let $\Theta^N:\Rm_{>0}^N \to \mathcal{P}(\Rm_{>0})$ be the map which takes the points $x_1,\dots,x_N \in \Rm_{>0}$ to their empirical measure,
\begin{equation}
 \Theta^N(x_1,\ldots,x_N) = \frac{1}{N}\sum_{k=1}^N\delta_{x_k},
\label{eq:empirical map}
\end{equation}
which is invariant under permutation of the indices. 

We consider for each $N<\infty$ the stationary empirical measure of the $N$-particle Fleming-Viot particle system, 
\begin{equation}
\chi^N:=\Theta^N_{\#}\psi^N=\Law_{\vec{X}^N\sim \psi^N}(\frac{1}{N}\sum_{i=1}^N\delta_{X^{N,i}}).
\end{equation}
This is the law of the empirical measure of the $N$-particle Fleming-Viot process when the process is distributed according to its stationary distribution $\psi^N$. 

Our main theorem is the following selection principle.
\begin{theo}[Selection principle for the Fleming-Viot process]\label{theo:main theorem}
We have the convergences
\begin{align}
\chi^N\ra \pi_{\min}\quad \text{weakly in probability and}\quad \lambda_N\ra \lambda_{\min}\quad \text{as}\quad N\ra\infty.
\end{align}
\end{theo}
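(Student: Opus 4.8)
The plan is to establish the selection principle via a soft compactness-and-identification argument, avoiding quantitative estimates on the $N$-particle system. The key structural fact is that for each $N$, the stationary empirical measure satisfies an exact identity relating the ``generator applied to test functions'' to the jump rate: if $\vec X^N\sim\psi^N$ and $f\in C_b^2(\Rm_{>0})$ with $f(0)=0$, then stationarity of $\psi^N$ forces
\begin{equation}
\expE_{\psi^N}\Big[\tfrac1N\sum_{i=1}^N\big(\tfrac12 f''(X^{N,i})-f'(X^{N,i})\big)\Big]+\lambda_N\,\expE_{\psi^N}\big[\langle m^N,f\rangle\big]-\lambda_N\,\expE_{\psi^N}\big[\langle m^N,f\rangle\big]\cdot 0=0,
\end{equation}
more precisely the jump term contributes $\expE_{\psi^N}[\lambda_N(\langle m^N,f\rangle-f(0))]=\lambda_N\expE_{\psi^N}[\langle m^N,f\rangle]$ because a jump from $0$ lands on the empirical distribution of the remaining particles. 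Writing $\bar\chi^N:=\expE[\chi^N]=\expE_{\psi^N}[m^N]\in\calP(\Rm_{>0})$ for the mean measure, this reads $\langle \bar\chi^N, Lf\rangle = -\lambda_N\langle\bar\chi^N,f\rangle$ for $Lf=\tfrac12 f''-f'$, which is exactly the statement that $\bar\chi^N$ is a QSD-type eigenmeasure with eigenvalue $\lambda_N$ — i.e.\ up to boundary-term technicalities, $\bar\chi^N$ solves the same eigenproblem that the $\pi_\lambda$ solve. (Some care is needed because $f''$ and $f'$ are only bounded if we restrict to a good class; one handles the behaviour near $0$ and near $\infty$ by an approximation/truncation argument, using that $\psi^N$ has no atom at $0$.)

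First I would set up tightness: show that $\{\bar\chi^N\}_N$ is tight in $\calP(\Rm_{>0})$ and, more strongly, that the exponential moments are controlled uniformly, so that any weak subsequential limit $\chi^\infty$ satisfies $\int e^{ux}\chi^\infty(dx)<\infty$ for all $u<1$ — this is the crucial point, since by Theorem~\ref{theo:domain of attraction of minimal QSD} (condition \eqref{eq:condition for domain of attraction of minimal QSD}) it rules out every QSD except $\pi_{\min}$. Tightness and these moment bounds are where the real work lies; I would obtain them from a Lyapunov-free comparison argument, using that a tagged particle behaves like $(X_t)$ with drift $-1$ between jumps and that jumps redistribute mass according to the current empirical measure, combined with the a priori bound $\lambda_N\in(0,\infty)$ for $N\geq 12$ from Theorem~\ref{theo:ergodicity for fixed N}. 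One can also extract that $\{\lambda_N\}$ is bounded (if $\lambda_N\to\infty$ along a subsequence the mean measure would have to concentrate at $0$, contradicting that it is a probability measure on $\Rm_{>0}$ satisfying the eigenrelation), so $\lambda_N$ has subsequential limits $\lambda_\infty\in[0,\tfrac12]$.

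Next, passing to the limit in the identity $\langle\bar\chi^N,Lf\rangle=-\lambda_N\langle\bar\chi^N,f\rangle$ along a convergent subsequence, I would conclude that the limit $\chi^\infty$ is a QSD of $(X_t)_{0\le t<\tau_\partial}$ with eigenvalue $\lambda_\infty$. By the Mart\'inez--Picco--San Mart\'in classification \eqref{eq:QSDs of drift -1}, $\chi^\infty=\pi_{\lambda_\infty}$ for some $0<\lambda_\infty\le\tfrac12$. Then the uniform exponential-moment bound (equivalently condition \eqref{eq:Martinez condition for domain of attraction} with the decay rate $-1$, not merely $-\sqrt{1-2\lambda_\infty}<1$) forces $\lambda_\infty=\tfrac12$, hence $\chi^\infty=\pi_{\min}$. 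Since every subsequence has a further subsequence converging to $\pi_{\min}$, we get $\bar\chi^N\to\pi_{\min}$ and $\lambda_N\to\lambda_{\min}$. Finally, convergence of the \emph{random} measure $\chi^N$ weakly in probability to the deterministic limit $\pi_{\min}$ follows from convergence of the means together with a propagation-of-chaos / asymptotic-decorrelation statement for $\psi^N$: one shows $\Var_{\psi^N}(\langle m^N,f\rangle)\to 0$ for $f\in C_b$, e.g.\ by exchangeability and an estimate that two tagged particles under $\psi^N$ become asymptotically independent as $N\to\infty$.

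The main obstacle I anticipate is the uniform exponential-moment / tightness bound on $\bar\chi^N$ with the \emph{sharp} decay rate $-1$: it is easy to get some tightness, but pinning down that the tail exponent is at least $1$ (rather than some $N$-dependent or merely positive rate) is exactly what selects $\pi_{\min}$ over the other QSDs, and it is precisely the analogue of the delicate microscopic-selection phenomenon for $N$-BBM. I would try to get it by a self-consistent argument: the tail of $\bar\chi^N$ is fed by the drift-$-1$ motion (which produces exponential tails with rate $\le 1$ on its own) plus the jump redistribution (which only moves mass to already-occupied sites and so cannot create heavier tails), making the rate-$1$ bound stable in $N$; making this rigorous without fine particle estimates is the heart of the argument, and is where I expect to spend most of the effort, possibly invoking a coupling with $N$ independent drift-$-1$ Brownian motions conditioned appropriately.
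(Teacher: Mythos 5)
There are two genuine problems with your plan. First, the stationarity identity you write down is wrong. In stationarity the diffusion part and the jump part balance, but the jump part is an average of $\langle m^N_{\tau_n},f\rangle$ \emph{over jump times}, i.e.\ over the stationary distribution of the jump-time chain $(\vec{X}^N_{\tau_n})_n$, which is \emph{not} $\psi^N$: configurations are size-biased towards having particles near $0$ at jump times. The correct identity is $\xi^N(Lf)=-\lambda_N\varpi^N(f)$, where $\xi^N$ is the mean measure under $\psi^N$ but $\varpi^N$ is the mean measure under the jump-time stationary law $\phi^N$, and these are genuinely different measures. Your version, $\langle\bar\chi^N,Lf\rangle=-\lambda_N\langle\bar\chi^N,f\rangle$, would make $\bar\chi^N$ an exact eigenmeasure of $L$, hence (by the Mart\'inez--Picco--San Mart\'in classification) exactly equal to $\pi_{\lambda_N}$ for every finite $N$, which is false. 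The two measures are instead related through the Green kernel, $\xi^N=\lambda_N\varpi^N G$, and it is this relation — evaluated on the constant function, $\lambda_N\varpi^N(G1)=1$ with $G1(x)\ra\infty$ — that yields tightness of $\{\varpi^N\}$ essentially for free, once one also has the lower bound $\liminf_N\lambda_N\geq\lambda_{\min}$ (itself a nontrivial lemma, proved via an exact PDE identity between $\expE[(1-\tfrac1N)^{NJ^N_t}m^N_t]$ and the killed semigroup).

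Second, the step you correctly identify as the heart of the matter — a uniform-in-$N$ exponential moment bound with the \emph{sharp} rate $u<1$ — is exactly what cannot be obtained by direct estimates in this model, and your heuristic (jumps ``cannot create heavier tails'') does not survive scrutiny: the redistribution feeds mass to the maximal particles at rate $\lambda_N\approx\tfrac12$ per unit time per particle, and whether the resulting tail exponent is $1$ or $1-\epsilon$ is precisely the selection problem. Indeed there exist $\mu$ with $\int e^{(1-\epsilon)x}\mu(dx)<\infty$ that are not attracted to any QSD, so nothing short of the exact rate works, and the known obstruction to Lyapunov functions (the failure of $\xi_1$-positive recurrence) indicates that no such uniform moment estimate is available. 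The published argument avoids this entirely: after the Green-kernel tightness, it shows subsequential limits of the empirical-measure laws are invariant under the conditioned flow $\vartheta_y$ (hydrodynamic limit with tight initial data), proves $\Upsilon(T_1)\leq T_1(\pi_{\min})$ by Fatou, and then runs Birkhoff's ergodic theorem on $(\calP(\Rm_{>0}),\Upsilon,\vartheta_1)$ together with the survival-rate characterisation \eqref{eq:limsup of survival prob bded} of the domain of attraction to force $\Upsilon=\delta_{\pi_{\min}}$; the exponential moments of the limit and the concentration of $\chi^N$ (your propagation-of-chaos step, for which you also have no mechanism) then come out as consequences rather than inputs. Without replacing your eigenmeasure identity and your sharp moment bound by some such soft mechanism, the proposal does not close.
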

The following diagram summarises the relationship between the above.

\begin{tikzcd}[column sep=6cm, row sep=3cm]
\substack{\text{N-particle Fleming-Viot}\\\text{particle system}} \arrow[r, "\text{Theorem 2.2, \cite{Villemonais2011}}\quad (N\ra\infty)"]  \arrow[d, "\substack{\text{Theorem }\ref{theo:ergodicity for fixed N}\\(t\ra\infty)}"]
& \substack{\text{Distribution of $X_t$ conditioned}\\ \text{on survival with initial condition $\mu$}} \arrow[d, "\substack{\text{Theorem 1.3, \cite{Martinez1998},}\\ \text{if $\mu$ satisfies \eqref{eq:Martinez condition for domain of attraction}}\\ (t\ra \infty)}"] \\
\substack{\text{Stationary distribution for}\\ \text{the $N$-particle system}} \arrow[r, "\text{Theorem }\ref{theo:main theorem}\quad  (N\ra\infty)"]
& \substack{\text{Yaglom limit/}\\ \text{minimal QSD $\pi_{\min}$}}
\end{tikzcd}

If the weak limit in probability of the initial empirical measures, denoted as $\mu$, does not belong to the domain of attraction of the minimal QSD, then these limits will not commute. Consider, for example, a sequence of Fleming-Viot $N$-particle systems such that for some $\lambda\in (0,\frac{1}{2})$,
\[
m^N_0 \ra \pi_{\lambda}\quad\text{weakly in probability.}
\] 
The above diagram instead becomes the following.

\begin{tikzcd}[column sep=6cm, row sep=3cm]
\substack{\text{N-particle Fleming-Viot}\\\text{particle system}} \arrow[r, "\text{Theorem 2.2, \cite{Villemonais2011}}\quad (N\ra\infty)"]  \arrow[d, "\substack{\text{Theorem }\ref{theo:ergodicity for fixed N}\\(t\ra\infty)}"]
& \substack{\text{Distribution of $X_t$ conditioned}\\ \text{on survival with initial condition $\pi_{\lambda}$}} \arrow[d, "\substack{\text{Trivial}\\ (t\ra \infty)}"] \\
\substack{\text{Stationary distribution for}\\ \text{the $N$-particle system}} \arrow[r, "\text{Theorem }\ref{theo:main theorem}\quad  (N\ra\infty)"]
&\pi_{\min}\neq \pi_{\lambda} \quad\quad\quad
\end{tikzcd}

\subsection{Background and related work}

\subsubsection*{The selection problem for the Fleming-Viot process}

The Fleming-Viot particle system was first introduced by Burdzy, Hołyst and March \cite{Burdzy2000} to provide a particle representation for quasi-stationary distributions. Their work involved the particular case of purely Brownian dynamics in a bounded domain. They established that the empirical measure of the $N$-particle stationary distribution converges as $N\ra\infty$ to the unique QSD, as in Theorem \ref{theo:main theorem}. This was later extended to allow for more general domains in \cite[Theorem 7.2]{Bieniek2009}. A similar result was established by Ferrari and Maric \cite[Theorem 1.4]{Ferrari2006} in countable state spaces under a Dobrushin-type condition ensuring in particular uniqueness of the QSD and fast return from infinity. The finite state space case, for which we have a unique QSD by classical results of Darroch and Seneta \cite{Darroch1965,Darroch1967}, was dealt with by Ferrari and Asselah in \cite{Asselah2011}. Finally, the McKean-Vlasov setting was considered by Nolen and the present author in \cite{Tough2022}. It was established that if the corresponding killed McKean-Vlasov process conditioned on survival converges to a unique QSD, then the empirical measure of the $N$-particle stationary distribution also converges to this unique QSD as $N\ra\infty$ \cite[Theorem 2.16]{Tough2022}.

The aforementioned results were restricted to settings for which there is a unique QSD. Settings in which we have infinitely many QSDs have been considered by Villemonais in \cite{Villemonais2015}, Asselah, Ferrari, Groisman and Jonckeere in \cite{Asselah2016}, and Champagnat and Villemonais in \cite{Champagnat2021a}. The first considered birth-death processes on $\Nm$ satisfying a Lyapunov-type condition. In the second the dynamics are given by a subcritical Galton-Watson process on $\Nm$ which is killed at $0$. The latter considered processes with soft killing (i.e. according to a position dependent Poisson clock), where the killing rate is everywhere less than the exponential rate of convergence of the unkilled process. These established that the empirical measure of the $N$-particle stationary distribution converges as $N\ra\infty$ to the corresponding minimal QSD, under various assumptions.

Whereas this article concerns itself with the selection problem for the Fleming-Viot process driven by Brownian motion with constant drift $-1$, there is an identical problem for the biased random walk on $\Nm$ with constant bias towards $0$, killed instantaneously at $0$. For this identical problem, Maric provided numerical evidence that the selection principle should be true in \cite{Maric2015}, but nothing has been proven about this selection problem. In order to extend the results of this paper to the Fleming-Viot process driven by a biased random walk on $\Nm$, the only missing piece is a characterisation of the domain of attraction of the Yaglom limit akin to \cite[Theorem 1.3]{Martinez1998}. Whilst one would imagine it should be the same, to the authors' knowledge this has not yet been done.

Nevertheless, despite their apparent simplicity, the selection problem for the Fleming-Viot process driven by Brownian motion with constant negative drift, and the identical problem for the biased random walk on $\Nm$, have proven surprisingly difficult. The selection principles established in \cite{Villemonais2015}, \cite{Asselah2016} and \cite{Champagnat2021a} are obtained by constructing Lyapunov functions for the $N$-particle system, which are required to be uniform in $N$. However, in the case of the biased random walk on $\Nm$, Villemonais has established in \cite[Theorem 2.5]{Villemonais2015} that the existence of a suitable Lyapunov function would imply $\xi_1$-positive recurrence, which is known to be untrue (see \cite[Remark 3.3]{Villemonais2015}). Therefore a Lyapunov function approach would seem to have no hope of being successful in the present setting.

In general, in order to establish convergence of the stationary empirical measures $\chi^N$ to a given QSD $\pi$, it suffices to establish tightness of the $\chi^N$, and that subsequential limits are supported on the domain of attraction of $\pi$. The first difficulty is to establish tightness. The particle system spreads out every time a particle is killed, since this particle jumps from $0$ to one of the other particles. Counteracting this is only a weak $-1$ drift pushing the particles to $0$. This makes it difficult to obtain estimates on the particles providing for tightness, which was unknown prior to the present article. Moreover, examining \eqref{eq:QSDs of drift -1}, we see that the minimal QSD $\pi_{\min}=\pi_{\frac{1}{2}}$ is very similar to the QSD $\pi_{\frac{1}{2}-\epsilon}$ for small $\epsilon>0$. In fact, whilst the minimal QSD is given by $\pi_{\min}=M_{\frac{1}{2}}xe^{-x}$, \cite[Theorem 1.4]{Martinez1998} provides for probability measures $\mu$ and arbitrarily small  $\epsilon>0$ such that $\int_{\Rm_{>0}}e^{(1-\epsilon)x}\mu(dx)<\infty$ whilst $\mu$ does not belong to the domain of attraction of any QSD. Therefore controls distinguishing subsequential limits as $\pi_{\min}$ would have to be sharp. We may contrast this with the case of Ornstein-Uhlenbeck dynamics, for which the minimal QSD is $Mxe^{-x^2}$ whilst all other QSDs have polynomial tails (this follows from \cite[(23)]{Lladser2000}). We may also contrast the domain of attraction of the minimal QSD in the present setting with the Galton-Watson setting considered in \cite{Asselah2016}, where the domain of attraction of the minimal QSD includes all probability measures with finite first moment \cite[Theorem 6, p.352]{Meleard2011}.

Our proof will require neither fine controls on the particle system nor the use of Lyapunov functions.

\subsubsection*{The selection problem in front propagation}

The selection problem arose in the context of front propagation. The reader is directed towards \cite{Groisman2019}, due to Groisman and Jonckeere, for a survey of the relationship between the selection problem for QSDs and the Fleming-Viot process, and the selection problem in front propagation.

The Fisher-KPP equation,
\begin{equation}\label{eq:FKPP equation}
\frac{\partial u}{\partial t}=\frac{\partial^2 u}{\partial x^2}+u(1-u),
\end{equation}
was introduced independently in 1937 by Fisher \cite{Fisher1937} and Kolmogorov, Petrovskii and Piskunov \cite{Kolmogorov1937} as a model for the spatial spread of an advantageous allele. It was independently shown by both to have an infinite family of travelling wave solutions - solutions of the form $u_c(t,x)=w_c(x-ct)$ - for all wave speeds $c\geq c_{\min}=2$, but not for any wave speed less than $2$. Kolmogorov, Petrovskii and Piskunov \cite{Kolmogorov1937} established that, starting a solution $u$ of \eqref{eq:FKPP equation} from a Heaveside step function, there exists $\sigma(t)=2t+o(t)$ such that $u(x+\sigma(t),t)$ converges to $w_{c_{\min}}(x)$. Bramson \cite{Bramson1978,Bramson1983} refined the speed $\sigma$ to $\sigma(t)=2t-\frac{3}{2}\log t+\mathcal{O}(1)$, and showed that the domain of attraction is given by initial conditions $u_0$ such that $\liminf_{x\ra -\infty}\int_{x-H}^xu_0(y)dy>0$ for some $H<\infty$, and 
\[
\limsup_{x\ra\infty}\frac{1}{x}\log \Big[\int_x^{x(1+h)}u_0(y)dy\Big]\leq -1.
\]
Therefore we have convergence to the travelling wave with minimal wave speed when the initial condition has sufficiently light tails. This is a \textit{macroscopic selection principle}. The parallels with the phenomenon of convergence to the minimal QSD whenever the initial condition is sufficiently light, as in Theorem \ref{theo:domain of attraction of minimal QSD}, are clear.

We contrast this with a \textit{microscopic selection principle}, in which the introduction of a microscopic noise term has the effect of restricting the possible travelling waves to only the minimal one. This is natural, since any physical system must have a small amount of noise. The first such microscopic selection principle is due to Bramson et al. \cite{Bramson1986} in 1986. They considered a system parametrised by a parameter $\gamma<\infty$ (large $\gamma$ representing small noise) which has a hydrodynamic limit given by a reaction-diffusion equation as $\gamma\ra\infty$. They showed that for all $\gamma<\infty$ this system, seen from its rightmost particle, has a unique invariant distribution. Then they showed that the velocity of this stationary distribution, appropriately rescaled, converges to the minimal wave speed of the corresponding reaction-diffusion equation. We note, however, that this is a \textit{weak selection principle}, meaning that they established convergence of the wave speed but not of the profile of the stationary distribution. 

Later, Brunet and Derrida et al. initiated the study of the effect of noise on front propagation in \cite{Brunet1997,Brunet1999,Brunet2001,Brunet2006,Brunet2007}. One can incorporate noise either by considering a stochastic PDE, or an interacting particle system. In the former case, one may consider the stochastic FKPP,
\begin{equation}\label{eq:stochastic FKPP equation}
\frac{\partial u}{\partial t}=\frac{\partial^2 u}{\partial x^2}+u(1-u)+\epsilon \sqrt{u(1-u)}\dot{W},
\end{equation}
for small $\epsilon>0$. Brunet and Derrida conjectured that the speed of its travelling wave is given by 
\begin{equation}\label{eq:speedup of stochastic FKPP wave speed}
2-\pi^2\lvert \log \epsilon^2\rvert^{-2}+\calO((\log \lvert \log \epsilon\rvert)\lvert \log \epsilon\rvert^{-3}).
\end{equation}
This was proven by Mueller, Mytnik and Quastel \cite{Mueller2011}. We see that the wave speed converges to the minimal wave speed $v_{\min}=2$ as $\epsilon\ra 0$, but with a strikingly large correction term. 

Brunet and Derrida introduced a particle system of fixed size $N$, in which particles undergo repeated steps of branching and selection \cite{Brunet1997,Brunet1999}. Whereas various variants of this particle system have been considered in the literature, we descibe explicitly the $N$-branching Brownian motion ($N$-BBM), considered by Maillard in \cite{Maillard2016}. In this particle system, $N$ Brownian motions evolve in $\Rm$. Each particle branches at rate $1$, at which time the minimal particle is killed, keeping the mass constant. We may equivalently view the minimal particle as being killed at rate $N-1$, at which time it jumps onto the location of another particle chosen uniformly at random. We may view this particle system from a moving frame of reference in which its leftmost particle is fixed at $0$. We see that if the original particle system is moving to the right at velocity $c$ in the original frame of reference, then the particles have a drift $-c$ in the moving frame of reference (roughly speaking). In the moving frame of reference the particles are killed at rate $N-1$ (so effectively instantaneously) when they reach $0$, at which time they are redistributed to one of the other particles chosen uniformly at random. The connection with the Fleming-Viot particle system with drift $-1$ is clear. The difference is that in the Fleming-Viot process with drift $-1$ the drift is held constant while the killing rate varies, whilst in the $N$-BBM the killing rate is held constant while the drift varies in time.

De Masi, Ferrari, Presutti and Soprano-Loto established a hydrodynamic limit theorem for the $N$-BBM in \cite{DeMasi2019}. The hydrodynamic limit they established is given by solutions of a free boundary problem, $(\gamma_t,u_t)$, where $u_t$ is supported on $\{x:x\geq \gamma_t\}$. This has an infinite family of travelling wave solutions. These travelling wave solutions correspond to the QSDs of Brownian motion with constant drift $-1$ according to the following \cite[p.547]{DeMasi2019}. We recall that Brownian motion with constant drift $-1$ has a one-parameter family of QSDs $(\pi_{\lambda})_{0< \lambda\leq \frac{1}{2}}$ with associated eigenvalues $\lambda(\pi_{\lambda})=\lambda$. The minimal QSD is $\pi_{\frac{1}{2}}$, with eigenvalue $\lambda_{\min}=\frac{1}{2}$. We write $\pi_{\lambda}(x)$ for the density of $\pi_{\lambda}(dx)$. Then the aforementioned travelling wave solutions are given by
\begin{equation}\label{eq:travelling waves corresponding to QSDs}
u_c(t,x+ct)=w_c(x)=Z_c\pi_{\frac{1}{c^2}}\big(cx\big)\quad\text{for wave speeds}\quad \frac{1}{\sqrt{\lambda_{\min}}}=c_{\min}=\sqrt{2}\leq c<\infty,
\end{equation}
whereby $Z_c$ for $c\geq \sqrt{2}$ are normalisation constants. Therefore the profiles of the travelling waves are given by the profiles of the QSDs of $(X_t)_{0\leq t<\tau_{\partial}}$ given by \eqref{eq:QSDs of drift -1}, with space rescaled. We note in particular that the travelling wave with minimal wave speed corresponds to the minimal QSD for $(X_t)_{0\leq t<\tau_{\partial}}$. Convergence of the empirical measure of the $N$-BBM in its stationary distribution to this minimal travelling wave as $N\ra\infty$, as is proven for the Fleming-Viot process with drift $-1$ in Theorem \ref{theo:main theorem}, has been conjectured by Maillard (\cite[p.19]{Maillard2012}, \cite[p.1066]{Maillard2016}), Groisman and Jonckeere \cite[Conjecture 3.1]{Groisman2013}, N. Berestycki and Zhao \cite[p.659]{Berestycki2018}, and De Masi, Ferrari, Presutti and Soprano-Loto \cite[p.548]{DeMasi2019}. This remains an open problem.

Brunet and Derrida made a prediction similar to \eqref{eq:speedup of stochastic FKPP wave speed} for the particle systems they introduced, which was proven for a similar particle system by Bérard and Gouéré in \cite{Berard2010}. In particular, Bérard and Gouéré showed that the $N$-particle system they consider has an asymptotic speed $v_N$ given by $v_N=v_{\infty}-\frac{\chi}{\log^2N}+o(\log^{-2}N)$, for constants $v_{\infty},\chi>0$. In \cite{Durrett2011}, Durrett and Reminik considered another variant of the Brunet-Derrida particle system. They showed that as $N\ra\infty$ over fixed times, the particle system converges to solutions of a free boundary problem. Moreover they established that the process viewed from its tip converges to a unique stationary distribution as $t\ra\infty$ for fixed $N$, and that the asymptotic speed of the $N$-particle system converges to the minimal speed of travelling waves of the free boundary problem. Maillard has established in \cite[Theorem 1.1]{Maillard2016} that the $N$-BBM travels as a spectrally positive Levy process over a $\log^3N$ timescale, so random fluctuations in its speed occur over a $\log^3N$ timescale. We note that this assumes that the $N$-BBM is given the right initial distribution, so doesn't say anything about the behaviour for arbitrarily large $t$. N. Berestycki and Zhao \cite{Berestycki2018} considered a multidimensional generalisation of the $N$-BBM in \cite{Berestycki2018}. They showed that this particle system has an asymptotic speed in a possibly random direction, and that the asymptotic speed of the $N$-particle system converges to the speed of the corresponding minimal travelling wave, with a $\frac{-\pi^2}{\sqrt{2} \log^2N}$ correction. Groisman, Jonckeere and Mart{\'{i}}nez \cite{Groisman2020} considered a system whereby, for any pair of particles, at constant rate the larger of the two branches into two particles and the smaller of the two is killed. They established a hydrodynamic limit given by the FKPP equation, convergence as $t\ra\infty$ to a unique stationary distribution for fixed $N$, and that the asymptotic speed of the $N$-particle system converges as $N\ra\infty$ to the minimal travelling wave speed of the FKPP equation.

These provide for weak selection principles, in which only the asymptotic velocity of the $N$-particle system is shown to converge to the velocity of the minimal travelling wave, with an explicit correction term in some cases. To the authors' knowledge a full microscopic selection principle - in which one also shows that the empirical measure of the stationary $N$-particle system converges to the profile of the minimal travelling wave - has not been established in the travelling wave context. 

We recall that the selection principles established in \cite{Villemonais2015}, \cite{Asselah2016} and \cite{Champagnat2021a} proceeded by way of Lyapunov functions, an approach we shouldn't expect to work in the case of constant negative drift by \cite[Remark 3.3]{Villemonais2015}. However in the travelling wave context, the particle system should naturally have a drift $-c$ (roughly speaking) in the moving frame of reference, so we shouldn't expect a Lyapunov function approach to work. The selection principle we shall establish for the Fleming-Viot process with drift $-1$ is indeed proven without the use of Lyapunov functions. In the opinion of the present author, this proof is more likely to be applicable in the travelling wave context.

A full selection principle has been established for the Brownian bees particle system by J. Berestycki, Brunet, Nolen and Penington \cite{Berestycki2022}. This is a variant of the $N$-BBM whereby, instead of killing the leftmost particle at each selection step, one instead kills the particle furthest away from $0$. This has the effect of constraining the particles to a compact set, in contrast to the travelling wave context. They established that as $N\ra\infty$, this particle system approximates the solution of a free boundary problem, which we denote as $(R_t,u_t)$. The boundary of this free boundary problem is given by a ball of radius $R_t$ (dependent upon time). On the interior of the ball $u$ satisfies $\partial_tu=\Delta u+u$ with Dirichlet boundary conditions. The radius $R_t$ is uniquely chosen to conserve mass. As $t\ra \infty$, $u_t$ converges to the principal Dirichlet eigenfunction on a ball of radius $R_{\infty}$, with $R_{\infty}$ uniquely chosen so that the eigenvalue is $-1$. This is the unique steady state, giving the only possible long-time limit - there's no analogue of non-minimal travelling waves. They establish that this particle system converges to a unique stationary distribution for fixed $N$, and that this stationary distributions converges to the aforedescribed principal Dirichlet eigenfunction on the ball of radius $R_{\infty}$ as $N\ra\infty$.

\subsection{Structure of the paper}

In the following section we will prove Theorem \ref{theo:main theorem}. This is then followed by the appendix.

\section{Proof of Theorem \ref{theo:main theorem}}

We assume throughout that $N\geq 12$, so that $\lambda_N<\infty$ by Theorem \ref{theo:ergodicity for fixed N}. This incurs no loss of generality since Theorem \ref{theo:main theorem} is a statement about the limiting behaviour as $N\ra\infty$.

We begin with the following crucial lemma.
\begin{lem}\label{lem:liminf of lambda N}
We define $\iota_N:=-N\ln(1-\frac{1}{N})$ for $N\geq 12$. Then we have that $\lambda_N\geq \frac{\lambda_{\min}}{\iota_N} $ for all $N\geq 12$, so that $\liminf_{N\ra\infty}\lambda_N\geq \lambda_{\min}$ in particular.
\end{lem}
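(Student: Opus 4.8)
The plan is to compare the $N$-particle Fleming--Viot system with a cemetery-augmented single-particle process, exploiting the fact that one "tagged" particle within the system, ignoring the information gained from the jumps of the other particles, behaves essentially like the killed Brownian motion $(X_t)_{0\le t<\tau_\partial}$ of \eqref{eq:BM with drift -1}, except that at each of \emph{its own} killing events it survives with probability $1-\tfrac1N$ (it jumps to another particle) rather than being killed outright. More precisely, fix $N\ge 12$, start the system from its stationary distribution $\psi^N$, and track a single index, say $i=1$. Between its own boundary hits, $X^{N,1}$ is a Brownian motion with drift $-1$; at each boundary hit it is resampled onto one of the other $N-1$ particles. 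The key observation is that at a given boundary hit of particle $1$, the probability that the jump target is \emph{itself} about to be removed is not what matters; rather, one sets up a coupling in which particle $1$ is "declared dead" independently with probability $\tfrac1N$ at each of its boundary hits. Under this thinning, the number of boundary hits of particle $1$ before it is declared dead is geometric, and each boundary hit of particle $1$ genuinely contributes (on average) to the global jump count $NJ^N_t$.

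First I would make the bookkeeping precise: let $L^N_t$ be the number of boundary hits of particle $1$ up to time $t$; by exchangeability under $\psi^N$, $\expE_{\psi^N}[NJ^N_t] = N\,\expE_{\psi^N}[L^N_t]/N \cdot N = N\expE_{\psi^N}[L^N_t]$ — more carefully, $\expE_{\psi^N}[J^N_t] = \expE_{\psi^N}[L^N_t]$ since the $N$ particles are exchangeable and $J^N_t = \tfrac1N\sum_{i}(\text{number of boundary hits of particle }i)$. Hence by stationarity of $\psi^N$ and \eqref{eq:almost sure convergence of lambda N fixed N} (together with the $L^p$ statement in Theorem \ref{theo:ergodicity for fixed N}, which gives the needed uniform integrability), $\lambda_N = \lim_{t\to\infty}\expE_{\psi^N}[J^N_t]/t = \lim_{t\to\infty}\expE_{\psi^N}[L^N_t]/t$, i.e. $\lambda_N$ is the long-run rate of boundary hits of a single tagged particle. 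Next I would bound this rate from below. Consider particle $1$ and thin its sequence of boundary hits: declare it "absorbed" at the $k$-th hit with probability $\tfrac1N$ independently, so that the thinning time $\sigma$ has $\Pm(\sigma > k\text{-th hit}) = (1-\tfrac1N)^k$. Up to the thinning time, the trajectory of particle $1$ — jumping to (a sample from the empirical measure of) the other particles at each boundary hit — is stochastically at least as spread out as, hence survives at least as long as, the process $(X_t)$ killed at $0$ started from the same law. The point is that the expected number of boundary hits of $(X_t)_{0\le t<\tau_\partial}$ before time $t$ equals, in the long run, $\lambda_{\min}$ per unit time when started from $\pi_{\min}$ — but here one wants the reverse inequality, so instead I would argue: the thinned tagged particle is absorbed after a $\Geom(\tfrac1N)$ number of its own boundary hits; between the tagged particle being "reborn" (via a previous absorption-and-relaunch) it behaves like $(X_t)$, whose expected survival time from \emph{any} starting law is at most... no — rather, the expected time between consecutive boundary hits of a Brownian motion with drift $-1$ run from the point it jumped to is bounded. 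The clean route is: the rate of boundary hits of the tagged particle is at least the rate of boundary hits of $(X_t)$ conditioned to survive (which, in the Yaglom/stationary regime, is $\lambda_{\min}$) — the factor $\iota_N = -N\ln(1-\tfrac1N) \to 1$ enters precisely because "$N$ boundary hits of the tagged particle, each surviving w.p. $1-\tfrac1N$" corresponds to an effective killing rate scaled by $1/\iota_N$: $(1-\tfrac1N)^N = e^{-\iota_N}$, so the tagged particle, thinned at rate $\tfrac1N$ per boundary hit, is an instance of killed Brownian motion whose clock runs $\iota_N$ times slower relative to boundary hits, giving $\lambda_N \ge \lambda_{\min}/\iota_N$.

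The main obstacle I expect is making the stochastic-domination step rigorous: one must show that the tagged particle in the stationary $N$-particle system hits $0$ at least as often (in the long-run average sense) as the corresponding \emph{autonomous} killed-and-relaunched Brownian particle, despite the fact that the empirical measure it jumps onto is itself evolving and correlated with the tagged particle's past. I would handle this by a coupling argument: run $(X_t)$ and the tagged particle with the same driving Brownian increments between boundary hits, and at each boundary hit of the tagged particle relaunch both from the \emph{same} point (the jump target); the only discrepancy is that $(X_t)$ is killed when the thinning event fires whereas the tagged particle, if the global system would not remove it, continues — but for the purpose of counting boundary hits up to the thinning time the two agree exactly, so the long-run boundary-hit rate of the tagged particle is at least $\tfrac1{\iota_N}$ times the boundary-hit rate of killed Brownian motion in its stationary conditional regime. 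The latter rate is $\lambda_{\min}$ by the eigenvalue identity $\Pm_{\pi_{\min}}(\tau_\partial > t) = e^{-\lambda_{\min}t}$ together with the fact that $\lambda_{\min}$ is the rate at which probability mass is lost, equivalently the rate of boundary hits under $\pi_{\min}$. Assembling these inequalities and letting $t\to\infty$ then taking $\liminf_{N\to\infty}$, using $\iota_N\to 1$, yields $\liminf_N \lambda_N \ge \lambda_{\min}$.
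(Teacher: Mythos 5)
There is a genuine gap at the heart of your argument. Your reduction of $\lambda_N$ to the long-run boundary-hit rate of a single tagged particle (via exchangeability under $\psi^N$) is fine, and the thinning heuristic explaining the factor $\iota_N$ through $(1-\frac1N)^N=e^{-\iota_N}$ is in the right spirit. But the crucial inequality --- that the tagged particle's boundary-hit rate is at least $\iota_N^{-1}$ times the killing rate $\lambda_{\min}$ of the conditioned process --- is asserted, not proved. Your proposed coupling only shows that the tagged particle and an autonomous killed-and-relaunched particle agree \emph{when relaunched from the same points}; it says nothing about what those relaunch points are. If the other particles were concentrated far from the origin, the tagged particle would jump far from $0$ at each of its deaths and its hit rate could be arbitrarily small (the relaunch distribution could mimic a non-minimal QSD $\pi_{\lambda}$, whose killing rate is $\lambda<\lambda_{\min}$, or have even heavier tails). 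Ruling this out is precisely the content of the lemma --- and of the tightness problem the paper identifies as the main difficulty --- so the step is circular as written. Likewise, ``the rate of boundary hits of $(X_t)$ conditioned to survive is $\lambda_{\min}$'' only holds in the minimal-QSD regime, which is exactly what has not been established for the relaunch distribution.

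The paper closes this gap with an exact identity rather than a stochastic domination: for a deterministic initial configuration with empirical measure $m_0^N$ one has $\expE[e^{-\iota_N J^N_t}]=\Pm_{m_0^N}(\tau_{\partial}>t)$, i.e.\ the $(1-\frac1N)^{NJ^N_t}$-penalised empirical measure is an unbiased estimator of the killed semigroup (proved via a martingale plus a PDE uniqueness theorem). Since $m_0^N$ is compactly supported it lies in the domain of attraction of $\pi_{\min}$ by Theorem \ref{theo:domain of attraction of minimal QSD}, so $\Pm_{m_0^N}(\tau_{\partial}>t)\leq c_0 e^{-\lambda t}$ for every $\lambda<\lambda_{\min}$; an exponential Chebyshev bound then gives $\Pm(\iota_N J^N_t/t<\lambda-\epsilon)\leq c_0e^{-\epsilon t}$, and Borel--Cantelli yields $\liminf_{t\ra\infty} J^N_t/t\geq\lambda_{\min}/\iota_N$ almost surely. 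Note that this identity uses the full joint law of the system through $J^N_t$, not merely the marginal behaviour of one particle; any tagged-particle route would still need a substitute for it, or an a priori control on the relaunch distribution, neither of which your proposal supplies.
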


\begin{proof}[Proof of Lemma \ref{lem:liminf of lambda N}]
We fix $N\geq 12$. We consider a \textit{fixed deterministic} initial condition $\vec{X}_0^N\in \Rm_{>0}^N$ for the $N$-particle system, with empirical measure $m_0^N=\Theta^N(\vec{X}^N_0)$. We define the mean measure
\[
\rho_t(\cdot):=\expE_{\vec{X}_0^N}\big[\big(1-\frac{1}{N}\big)^{NJ^N_t}m^N_t(\cdot)\big].
\]
We now show that
\begin{equation}\label{eq:rho equals prob of survival}
\Pm_{m_0^N}(X_t\in \cdot,\tau_{\partial}>t)=\rho_t(\cdot)\quad \text{for all}\quad t\geq 0.
\end{equation}
\begin{proof}[Proof of \eqref{eq:rho equals prob of survival}]
Our strategy is to apply a PDE uniqueness theorem. We take arbitrary $K >\max_{1\leq i\leq N}X^{N,i}_0$. We sort particles into blue and red particles as follows. Initially all particles are blue. If a blue particle hits $K$ or is killed and jumps onto a red particle, it becomes red. If a red particle is killed and jumps onto a blue particle, it becomes blue. Particles don't otherwise change colour. The set of indices of blue particles at time $t$ is $\mathscr{B}^K_t$. We may therefore define
\[
m_t^{N,K}(\cdot):=\frac{1}{N}\sum_{i=1}^N\Ind(i\in \mathscr{B}^K_t)\delta_{X^{N,i}_t}(\cdot),\quad \rho^K_t(\cdot):=\expE_{\vec{X}_0^N}\big[\big(1-\frac{1}{N}\big)^{NJ^N_t}m^{N,K}_t(\cdot)\big].
\]
We define $\mathcal{S}_K:=\{\varphi\in C_c^{\infty}([0,\infty)\times [0,K]): \varphi\equiv 0\text{ on }[0,\infty)\times \{0,K\}\}$. It is straightforward to see that
\[
\big(1-\frac{1}{N}\big)^{NJ^N_t}\langle m_t^{N,K},\varphi\rangle - \langle m_0^{N,K},\varphi\rangle-\int_0^t\big(1-\frac{1}{N}\big)^{NJ^N_s}\langle m_s^{N,K},(L+\frac{\partial}{\partial s})\varphi\rangle ds \quad  \text{is a martingale}
\]
for all $\varphi\in \mathcal{S}_K$. It follows that $\rho_t^K$ is a solution of 
\begin{equation}\label{eq:weak solution of PDE}
u_t(\varphi)-u_0(\varphi)-\int_0^t\langle u_s,(L+\frac{\partial}{\partial s})\varphi\rangle ds=0 \quad \text{for all $\varphi\in \mathcal{S}_K$.}
\end{equation}
It follows from parabolic regularity that $\rho^K_t$ has a $C^{\infty}((0,\infty)\times (0,K))$ density, so $\rho_t\in L^1(\Leb)$ with $\lvert\lvert\rho_t\rvert\rvert_{L^1(\Leb)}\leq 1$ for all $t>0$ (since it has a mass at least $1$ at all times). We now define the stopping time $\tau_K:=\inf\{t>0:X_{t-}\geq K\}$ and observe that $\Pm_{\mu}(X_t\in \cdot,\tau_{\partial}\wedge \tau_K>t)$ is also a solution of \eqref{eq:weak solution of PDE}, for any $\mu\in \calP((0,K))$. We have a uniqueness theorem for solutions of \eqref{eq:weak solution of PDE} with $L^1$ initial data, \cite[Corollary 3.5]{Porretta2015a}, from which we conclude that 
\[
\rho^K_t(\cdot)=\rho^K_{\epsilon}(1)\Pm_{\frac{\rho^K_{\epsilon} }{\rho^K_{\epsilon}(1)}}(X_{t-\epsilon}\in \cdot,\tau_{\partial}\wedge \tau_K> t-\epsilon )\quad\text{for all}\quad 0<\epsilon<t. 
\]
Taking the limit as $\epsilon\ra 0$ with fixed $t>0$, we see that 
\[
\Pm_{m_0^N}(X_t\in \cdot,\tau_{\partial}\wedge \tau_K>t)=\rho_t^{K}(\cdot)\quad \text{for all}\quad t\geq 0.
\]
We therefore obtain \eqref{eq:rho equals prob of survival} by taking $K\ra \infty$.
\end{proof}

Evaluating \eqref{eq:rho equals prob of survival} against $1$, we see that
\begin{equation}\label{eq:liminf lambda N lemma mass of rhot expression}
\rho_t(1)=\expE_{\vec{X}_0^N}\big[e^{-\iota_N J^N_t}\big]=\Pm_{m_0^N}(\tau_{\partial}>t),\quad t\geq 0.
\end{equation}
Note that in the above expression on the right hand side, $\Pm_{m_0^N}(\tau_{\partial}>t)$ should be understood to be the probability of survival at time $t$ if we start a single particle with initial distribution given by the fixed empirical measure $m_0^N$.

Since $m_0^N$ is compactly supported, $\Law_{m_0^N}(X_t\lvert \tau_{\partial}>t)\ra \pi_{\min}$ in total variation as $t\ra\infty$ by Theorem \ref{theo:domain of attraction of minimal QSD} (in fact we only require that $\pi_{\min}$ is a Yaglom limit), so that $\Pm_{m^N_0}(\tau_{\partial}>t+1\lvert \tau_{\partial}>t)\ra \Pm_{\pi_{\min}}(\tau_{\partial}>1)=e^{-\lambda_{\min}}$ as $t\ra\infty$. Therefore there exists $c_0=c_0(m_0^N,\lambda)<\infty$ for all $\lambda<\lambda_{\min}$ such that
\begin{equation}\label{eq:liminf lambda N lemma c0 bounded}
c_0(m_0^N,\lambda):=\sup_{t\geq 0}[ e^{\lambda t}\Pm_{m_0^N}(\tau_{\partial}>t)]<\infty.
\end{equation}
Combining \eqref{eq:liminf lambda N lemma mass of rhot expression} with \eqref{eq:liminf lambda N lemma c0 bounded}, we see that
\[
\begin{split}
\Pm_{\vec{X}_0^N}(\iota_N\frac{J_t^N}{t}<\lambda-\epsilon)=\Pm_{\vec{X}_0^N}(\iota_NJ_t^N <\lambda t-\epsilon t)=\Pm_{\vec{X}_0^N}(\epsilon t<\lambda t-\iota_N J_t^N)\\\leq e^{-\epsilon t}\expE_{\vec{X}_0^N}[e^{\lambda t-\iota_NJ_t^N}]
=e^{-\epsilon t}[e^{\lambda t}\Pm_{m_0^N}(\tau_{\partial}>t)]\leq c_0e^{-\epsilon t}.
\end{split}
\]
It follows from the Borel-Cantelli lemma that 
\[
\Pm_{\vec{X}_0^N}(\iota_N\frac{J^N_t}{t}<\lambda-\epsilon\; \text{infinitely often at integer times $t\in\Nm$})=0.
\]
Since $\lambda<\lambda_{\min}$ and $\epsilon>0$ are arbitrary, we see that $\liminf_{t\ra\infty}\frac{J^N_t}{t}\geq \frac{\lambda_{\min}}{\iota_N}$ almost surely, for any fixed initial distribution $\vec{X}_0^N$. Since this holds for arbitrary fixed deterministic initial condition, it holds for arbitrary initial distribution, whence Lemma \ref{lem:liminf of lambda N} follows 
\end{proof}

\begin{defin}
We consider the discrete time Markov process obtained by observing the $N$-particle Fleming-Viot process at successive jump times, $(\vec{X}_{\tau_n^N}^N)_{0\leq n<\infty}$, which we refer to as the $N$-particle jump-time process, and denote as $(\vec{Y}_n)_{0\leq n<\infty}:=(\vec{X}_{\tau_n^N}^N)_{0\leq n<\infty}$.
\end{defin}

\begin{prop}\label{prop:exist and uniq of jump-time process stationary distn}
There exists a unique stationary distribution for the $N$-particle jump-time process, which we denote as $\phi^N\in \calP(\Rm_{>0}^N)$, for all $N\geq 12$. This is related to the stationary distribution $\psi^N\in \calP(\Rm_{>0}^N)$ of the $N$-particle Fleming-Viot process by
\begin{equation}\label{eq:formula for jump-time stationary distribution}
\phi^N(\cdot)=\frac{1}{N\lambda_N}\expE_{\psi^N}\Big[\sum_{0< \tau_n\leq 1}\delta_{\vec{X}^N_{\tau_n}}(\cdot)\Big].
\end{equation}
\end{prop}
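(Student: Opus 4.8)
The plan is to establish existence and uniqueness of the stationary distribution $\phi^N$ for the jump-time chain $(\vec{Y}_n)$ by a standard Markov-chain argument, and then to derive formula \eqref{eq:formula for jump-time stationary distribution} by a Palm-calculus / cycle-averaging identity relating the continuous-time process in stationarity to its embedded jump chain. First I would verify that the continuous-time Fleming--Viot process is positive Harris recurrent with invariant law $\psi^N$ — this is exactly the content of Theorem \ref{theo:ergodicity for fixed N} (uniqueness of $\psi^N$, convergence in total variation) — and that the jump rate is bounded below and above on compacts so that jumps accumulate at rate $\asymp N\lambda_N$ in the long run, using \eqref{eq:almost sure convergence of lambda N fixed N} and the fact that $\lambda_N\in(0,\infty)$ for $N\ge 12$. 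Given this, the embedded chain at jump times inherits Harris recurrence, and by the ergodic theorem it admits a unique stationary distribution, which one identifies via the usual rate-conditioning formula.

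Concretely, for a test function $f$ on $\Rm_{>0}^N$, I would apply the ergodic theorem for the continuous-time process started from $\psi^N$ to get $\frac{1}{T}\sum_{0<\tau_n\le T} f(\vec X^N_{\tau_n}) \to $ (something) and also $\frac{1}{T}\sum_{0<\tau_n\le T} 1 = \frac{NJ^N_T}{T}\to N\lambda_N$ a.s. On the other hand, by stationarity of $\psi^N$ and additivity, $\expE_{\psi^N}\big[\sum_{0<\tau_n\le T} f(\vec X^N_{\tau_n})\big] = T\,\expE_{\psi^N}\big[\sum_{0<\tau_n\le 1} f(\vec X^N_{\tau_n})\big]$ (the jump point process is stationary under the time-shift because $\psi^N$ is time-invariant, so its intensity per unit time is constant). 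Dividing the two displays and passing to the limit identifies the stationary law of the embedded chain as
\[
\phi^N(f) \;=\; \frac{\expE_{\psi^N}\big[\sum_{0<\tau_n\le 1} f(\vec X^N_{\tau_n})\big]}{N\lambda_N},
\]
which is precisely \eqref{eq:formula for jump-time stationary distribution}. I would separately check that this defines a probability measure (take $f\equiv 1$: the numerator is $\expE_{\psi^N}[NJ^N_1]=N\lambda_N$ by stationarity plus \eqref{eq:almost sure convergence of lambda N fixed N}, so total mass is $1$) and that it is indeed invariant for the jump chain, either directly from the formula or by noting it is the a.s.\ limit of empirical occupation measures of the embedded chain.

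The main obstacle I anticipate is the interchange of limits and expectations needed to turn the a.s.\ statements \eqref{eq:convergence in TV of Law fixed N}--\eqref{eq:almost sure convergence of lambda N fixed N} into the clean identity above — specifically, justifying that $\expE_{\psi^N}[NJ^N_1]=N\lambda_N$ (as opposed to merely $\le$ or only an a.s.\ Cesàro limit) and that one may divide the two ergodic averages. Under $\psi^N$ the initial condition is random rather than deterministic, so one should invoke the $L^p$-convergence clause of Theorem \ref{theo:ergodicity for fixed N} only after conditioning on $\vec X^N_0$, or instead argue via the renewal/regeneration structure of the Harris chain: fix a small set and a regeneration time, express $\phi^N$ and $\psi^N$ both in terms of expected additive functionals over a regeneration cycle, and read off \eqref{eq:formula for jump-time stationary distribution} as the ratio of the cycle-sum of $f$ over jumps to the expected number of jumps per cycle (which equals $N\lambda_N$ times the expected cycle length). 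This regeneration route sidesteps the uniform-integrability issues and is the version I would write up if the direct ergodic-theorem argument proves delicate.
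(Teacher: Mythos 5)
Your proposal is correct in substance but takes a genuinely different route from the paper. The paper simply \emph{defines} $\phi^N$ by \eqref{eq:formula for jump-time stationary distribution}, uses stationarity of $\psi^N$ to rewrite it as $\phi^N(\cdot)=\frac{1}{N\lambda_N t}\expE_{\psi^N}[\sum_{0<\tau_n\le t}\delta_{\vec X^N_{\tau_n}}(\cdot)]$ for every $t\in\Nm_{>0}$, and then verifies invariance directly: applying the jump-chain kernel $\bfQ$ shifts the sum by one index, so $\lVert \lambda_N t\,\phi^N\bfQ-\lambda_N t\,\phi^N\rVert_{\TV}\le 2$ is a pure boundary effect, whence $\lVert\phi^N\bfQ-\phi^N\rVert_{\TV}\le 2/(\lambda_N t)\to 0$. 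This telescoping argument entirely sidesteps the ratio-ergodic-theorem and the interchange-of-limits issues you correctly identify as the delicate point of your Palm/cycle-averaging route; uniqueness is then dispatched by citing Theorem \ref{theo:ergodicity for fixed N} together with Birkhoff's theorem rather than by building Harris recurrence of the embedded chain. What your approach buys is a more conceptual identification of $\phi^N$ as the limiting occupation measure of the jump chain, and the regeneration fallback would indeed work; what the paper's approach buys is brevity and the avoidance of any uniform-integrability argument at the invariance step. Note, however, that both routes still need $\expE_{\psi^N}[NJ^N_1]=N\lambda_N$ to see that $\phi^N$ has total mass one, and your treatment of that point (conditioning on the deterministic initial condition and invoking the $L^p$ clause of Theorem \ref{theo:ergodicity for fixed N}, whose bound is uniform in the starting point) is exactly the justification the paper leaves implicit when it says the formula defines a probability measure ``by Theorem \ref{theo:ergodicity for fixed N}''.
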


\begin{proof}[Proof of Proposition \ref{prop:exist and uniq of jump-time process stationary distn}]
We define $\phi^N(\cdot)$ by \eqref{eq:formula for jump-time stationary distribution}. This is a well-defined probability measure by Theorem \ref{theo:ergodicity for fixed N}. Our goal is to show that it is the unique stationary distribution for the jump-time process.

We write $\bfQ$ for the transition kernel of the jump-time process. We observe that for all $t\in \Nm_{>0}$ we have
\[
\phi^N(\cdot)=\frac{1}{N\lambda_N t}\expE_{\psi^N}\Big[\sum_{0< \tau_n\leq t}\delta_{\vec{X}^N_{\tau_n}}(\cdot)\Big],
\]
since $\psi^N$ is the stationary distribution of $(\vec{X}^N_t)_{t\geq 0}$. We fix $t\in\Nm_{>0}$ and observe that
\[
\begin{split}
\lambda_Nt\phi^N \bfQ(\cdot)=\sum_{n\geq 0}\expE_{\psi^N}[\Ind(\tau_n\leq t)\bfQ(\vec{X}^N_{\tau_n},\cdot)]\\
=\sum_{n\geq 0}\expE_{\psi^N}[\Ind(\tau_n\leq t)\delta_{\vec{X}^N_{\tau_{n+1}}}(\cdot)]=\sum_{n\geq 1}\expE_{\psi^N}[\Ind(\tau_{n-1}\leq t)\delta_{\vec{X}^N_{\tau_{n}}}(\cdot)].
\end{split}
\]
We observe that
\[
\lvert\lvert\lambda_Nt\phi^N \bfQ(\cdot)-\lambda_nt\phi^N\rvert\rvert_{\TV}\leq 2.
\]
It follows that
\[
\lvert\lvert\phi^N \bfQ(\cdot)-\phi^N(\cdot)\rvert\rvert_{\TV}\leq \frac{2}{\lambda_Nt}.
\]
Since $t\in\Nm_{>0}$ is arbitrary, it follows that $\phi^N$ is a stationary distribution for the jump-time process.

The uniqueness of $\phi^N(\cdot)$ follows from Theorem \ref{theo:ergodicity for fixed N} and Birkhoff's theorem.
\end{proof}

We recall that $\chi^N:=\Theta^N_{\#}\psi^N$ is the stationary empirical measure of the $N$-particle Fleming-Viot process. We similarly define the stationary empirical measure of the $N$-particle jump-time process by 
\begin{equation}
\Upsilon^N:=\Theta^N_{\#}\phi^N\in \calP(\calP(\Rm_{>0})).
\end{equation}
We then define the mean measures of these two stationary empirical measures,
\begin{equation}
\xi^N(\cdot):=\expE[\chi^N(\cdot)]=\expE_{\vec{X}^N\sim \psi^N}[(\Theta^N_{\#}\vec{X}^N)(\cdot)],\quad \varpi^N(\cdot):=\expE[\Upsilon^N(\cdot)]=\expE_{\vec{X}^N\sim \phi^N}[(\Theta^N_{\#}\vec{X}^N)(\cdot)].
\end{equation}

We define $\calM_{\geq 0}(\Rm_{>0})$ and $\calB_{\geq 0}(\Rm_{\geq 0})$ to respectively be the space of non-negative (not necessarily finite) Borel measures on $\Rm_{>0}$ and non-negative (not necessarily finite) Borel functions on $\Rm_{>0}$. We then define the Green kernel
\[
Gf(x):=\expE_x\Big[\int_0^{\tau_{\partial}}f(X_s)ds\Big],\quad f\in \calB_{\geq 0}(\Rm_{\geq 0}),\quad \mu G(\cdot):=\expE_{\mu}\Big[\int_0^{\tau_{\partial}}\delta_{X_s}(\cdot) ds\Big],\quad \mu\in\calM_{\geq 0}(\Rm_{>0}).
\]
Note that $G$ is not bounded, in particular $G1(x)\ra \infty$ as $x\ra\infty$, which necessitates the above restriction to non-negative measures and Borel functions. The following proposition relates $\xi^N$ and $\varpi^N$.
\begin{prop}\label{prop:relation between piN and varpiN}
The mean stationary measures are related by $\xi^N =\lambda_N\varpi^N G $.
\end{prop}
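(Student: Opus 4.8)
The plan is to express both $\xi^N$ and $\varpi^N G$ as time-averages along the stationary Fleming-Viot trajectory and match them. The fundamental identity is the Green-kernel decomposition of the Fleming-Viot occupation measure: if we start the $N$-particle system in stationarity $\psi^N$ and run for unit time, then the total occupation measure $\expE_{\psi^N}[\int_0^1 \sum_{i=1}^N \delta_{X^{N,i}_s}(\cdot)\,ds]$ equals (by stationarity) $N\xi^N(\cdot)$. On the other hand, I want to rebuild this occupation measure by breaking each particle's trajectory into excursions between successive jumps. Between two consecutive jump times, a single particle performs the killed diffusion $(X_t)_{0\le t<\tau_\partial}$ started from the post-jump location (which is the position of some other particle), and run until it is killed; the contribution of such an excursion to the occupation measure is precisely $\delta_{\text{start}}\,G(\cdot)$ in expectation. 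So the plan is: (i) write $N\xi^N = \expE_{\psi^N}[\int_0^1 m^N_s(\cdot)\,ds]\cdot N$, (ii) decompose the integral over $[0,1]$ along the jump times $\tau_n$, handling the incomplete excursions at the two endpoints by a stationarity/telescoping argument so they cancel, and (iii) recognise each complete excursion's expected occupation measure as $G$ applied to the empirical measure of starting points, which in expectation is a constant times $\varpi^N G$.

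More concretely, I would proceed as follows. First, fix the unit time window and enumerate the jump times $0<\tau_1<\tau_2<\cdots$ in $(0,1]$, with $J^N_1$ the (renormalised) count. The occupation measure $\int_0^1 \sum_i \delta_{X^{N,i}_s}\,ds$ splits into: the occupation before $\tau_1$, the occupations of the "full intervals" $[\tau_n,\tau_{n+1})$ lying inside $(0,1)$, and the occupation after the last jump in the window. By the strong Markov property at $\tau_n$, conditionally on $\vec{X}^N_{\tau_n}$ the jumping particle evolves as $(X_t)_{t<\tau_\partial}$ from its (randomly reassigned) starting point until the next jump, which is exactly when it is killed; summing over which particle jumps and taking expectations, the expected occupation contributed by the excursion initiated at the $n$-th jump is $\delta_{\vec{X}^N_{\tau_n}}\cdot(\text{appropriate coordinate})\,G$, i.e. it is $G$ applied to $N$ times the empirical-measure-of-jump-locations. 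Using formula \eqref{eq:formula for jump-time stationary distribution} from Proposition \ref{prop:exist and uniq of jump-time process stationary distn}, namely $\phi^N = \frac{1}{N\lambda_N}\expE_{\psi^N}[\sum_{0<\tau_n\le 1}\delta_{\vec{X}^N_{\tau_n}}]$, the sum of these expected excursion-occupation measures over $n$ with $\tau_n\in(0,1]$ equals $N\lambda_N\,\varpi^N G(\cdot)$ (the factor $N$ coming from $\Theta^N$ versus $\sum_i$, matching the $N$ on the left). The boundary terms — occupation before the first jump and after the last — are handled by noting that under $\psi^N$ the "backward" excursion straddling time $0$ and the "forward" excursion straddling time $1$ have, by stationarity and time-homogeneity, the same joint law, so their expected occupation measures cancel in the bookkeeping; equivalently, one can run over $[0,t]$, divide by $t$, send $t\to\infty$ and use that the two incomplete-excursion terms are $O(1)$ in total variation while the main term is order $t$, exactly as in the proof of Proposition \ref{prop:exist and uniq of jump-time process stationary distn}.

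The main obstacle, and the step requiring the most care, is the rigorous identification of each complete inter-jump excursion's expected occupation measure with the Green kernel $G$, together with the bookkeeping of the endpoint excursions. One must be careful that $G1(x)\to\infty$ as $x\to\infty$, so $G$ is only defined on non-negative measures/functions — but this is fine here since all the measures involved are non-negative, and the identity $\xi^N=\lambda_N\varpi^N G$ is an identity of (possibly infinite) non-negative measures tested against non-negative functions; no integrability of $G$ against $\varpi^N$ is asserted or needed. A second subtlety is that the excursion between $\tau_n$ and $\tau_{n+1}$ is the trajectory of the specific particle that jumped at $\tau_n$, run until $\tau_{n+1}$; one needs to check that $\tau_{n+1}$ is indeed the killing time of that particle's killed-diffusion clock (it is, by the definition of the jump mechanism — the next jump of the whole system coincides with the next hitting of $0$ by that particle only if no other particle hits $0$ first, so actually one should sum the contributions of all particles between consecutive jump times and use that, between $\tau_n$ and $\tau_{n+1}$, every particle performs an independent killed diffusion censored at $\tau_{n+1}$; after summing over $n$ and using stationarity the censoring disappears). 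Carrying out this last point cleanly — either by the colour/telescoping device used in Lemma \ref{lem:liminf of lambda N} or by a direct Markov-property computation at jump times — is where the real work lies; everything else is time-averaging and the substitution of \eqref{eq:formula for jump-time stationary distribution}.
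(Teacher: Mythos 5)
Your excursion/occupation-measure argument is a genuinely different route from the paper's. The paper works at the level of the generator: it shows that $\langle m_t^N,\varphi\rangle-\langle m_0^N,\varphi\rangle-\int_0^t\langle m_s^N,L\varphi\rangle ds-\frac{1}{N}\sum_{\tau_n\le t}\langle m^N_{\tau_n},\varphi\rangle$ is a martingale for $\varphi\in\calD_{\geq 0}(L)$, takes expectations under $\psi^N$ to get the identity $\xi^N(L\varphi)=-\lambda_N\varpi^N(\varphi)$, and then converts $L$ into $G$ via the resolvent-type computation $LG_tf=P_tf-f$ with $G_t=\int_0^tP_s\,ds$, letting $t\to\infty$ monotonically and finishing with Riesz--Markov. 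You instead decompose the stationary occupation measure over a time window into per-particle inter-death excursions and identify each excursion's expected occupation with $\delta_{\text{birth location}}G$ by the strong Markov property; this is essentially the ``folklore'' Palm-type identity the paper explicitly mentions as an alternative (citing \cite[Proposition 3.1]{Benaim2023}). Your approach is more probabilistic and makes the meaning of the identity transparent (it is exactly the heuristic the paper gives after \eqref{eq:G1 integral of varpiN}); the paper's approach avoids excursion bookkeeping at the cost of some semigroup technology. You correctly spot the two real traps: that $\tau_{n+1}$ is not the death time of the particle reborn at $\tau_n$ (so one must decompose per particle, not per inter-jump interval), and that $\expE[\delta_{Z_n}]=\expE[m^N_{\tau_n}]$ only after averaging over the uniform choice of target particle --- the latter deserves the one-line conditional computation $\expE[m^N_{\tau_n}\,\lvert\,\mathcal{F}_{\tau_n-}]=\frac{1}{N-1}\sum_{j\neq i_n}\delta_{X^{j}_{\tau_n-}}=\expE[\delta_{Z_n}\,\lvert\,\mathcal{F}_{\tau_n-}]$, which is what lets you substitute \eqref{eq:formula for jump-time stationary distribution}.

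One justification as written is not quite right: the incomplete-excursion boundary terms are \emph{not} known to be $O(1)$ in total variation at this stage. Their expected total mass is of order $N\xi^N(G1)=N\expE_{\xi^N}[\tau_{\partial}]$, and finiteness of the first moment of $\xi^N$ is not available a priori (indeed $\xi^N(G1)=1/\lambda_N\cdot$finite is part of what the proposition is used to prove). The fix is to test against $f\in C_c(\Rm_{>0};\Rm_{\geq 0})$: since the killed drifted Brownian motion has a Green density bounded uniformly in the starting point on compact sets, $G(C_c)\subseteq C_b$, so each boundary term tested against $f$ is at most $N\lVert Gf\rVert_{\infty}<\infty$, uniformly in $t$; dividing by $t$ and letting $t\to\infty$ gives $\xi^N(f)=\lambda_N\varpi^N(Gf)$ for all such $f$, and one then identifies the two non-negative measures and extends to all non-negative Borel $f$ by monotone convergence, exactly as the paper does at the end of its own proof. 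With that repair your argument is complete.
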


Before proving Proposition \ref{prop:relation between piN and varpiN}, we firstly demonstrate how it provides for the tightness of $\{\varpi^N:N\geq 12\}$ in $\calP(\Rm_{\geq} 0)$. It follows from Proposition \ref{prop:relation between piN and varpiN} that
\begin{equation}\label{eq:G1 integral of varpiN}
\lambda_N\varpi^N(G1)=\xi^N(1)=1\quad \text{for all}\quad N\geq 12.
\end{equation}
Since $G1(x)\ra \infty$ as $x\ra \infty$ and $\liminf_{N\ra\infty}\lambda_N\geq \lambda_{\min}>0$, it follows from \eqref{eq:G1 integral of varpiN} that $\{\varpi_N:N\geq 12\}$ is tight in $\calP(\Rm_{\geq 0})$. It follows in particular that $\{\Upsilon^N:N\geq 12\}$ is tight in $\calP(\calP(\Rm_{\geq 0}))$.

Establishing any kind of tightness appears to be very difficult to accomplish by way of estimates. Thus, whilst seemingly almost trivial, this observation is crucial to the proof. 

We offer the following heuristic interpretation of \eqref{eq:G1 integral of varpiN}. There are two ways of calculating the total mass of all particles up to a large time $t$. One is simply to multiply $N$ by $t$. The other is to count the mass contributed by each particle lifetime, given by the time that particle survives after birth. The two must agree. If a particle is born far away from $0$, we should expect it to survive a long time, hence to contribute a lot of mass. However, since $\inf_N\lambda_N>0$, if $\varpi^N$ were to apply too much mass far away from $0$, then we would have too many particle births occurring far away from $0$, hence contributing too much mass.

\begin{proof}[Proof of Proposition \ref{prop:relation between piN and varpiN}]
The process $(X_t)_{0\leq t<\tau_{\partial}}$ is a $C_0$-Feller process on $\Rm_{>0}$, meaning that the submarkovian transition semigroup $(P_t)_{t\geq 0}$ defined by
\[
P_t:C_0(\Rm_{>0})\ni f\mapsto (\Rm_{>0}\ni x\mapsto \expE_x[f(X_t)\Ind(\tau_{\partial}>t)])\in C_0(\Rm_{>0})
\]
is a well-defined $C_0$-Feller semigroup. We denote its infinitesimal generator as $L$. We write $\calD(L)$ for the domain of $L$ and $\calD_{\geq 0}(L)$ for the non-negative elements of $\calD(L)$.

We consider the $N$-particle Fleming-Viot process with initial condition $\vec{X}^N_0\sim \psi^N$. We claim that
\begin{equation}\label{eq:martingale for mean stationary measures relationship}
\langle m_t^N,\varphi\rangle -\langle m_0^N,\varphi\rangle-\int_0^t\langle m_s^N,L\varphi\rangle ds-\frac{1}{N}\sum_{\tau_n\leq t}\langle m_{\tau_n}^N,\varphi\rangle\quad\text{is a martingale for all $\varphi\in \calD_{\geq 0}(L)$.}
\end{equation}
We fix $\varphi\in \calD_{\geq 0}(L)$. It is clear that \eqref{eq:martingale for mean stationary measures relationship} is a local martingale, localised up to the times $\tau_n$. We now fix $t<\infty$. Then we have that
\[
\expE[\frac{1}{N}\sum_{\substack{\tau_n\leq t\\ n\leq k}}\langle m_{\tau_n}^N,\varphi\rangle]\leq 2\lvert \lvert \varphi\rvert\rvert_{\infty}+t\lvert\lvert L\varphi\rvert\rvert_{\infty}\quad\text{for all}\quad k<\infty.
\]
By the monotone convergence theorem we see that 
\[
\expE[\frac{1}{N}\sum_{\substack{\tau_n\leq t\\ n> k}}\langle m_{\tau_n}^N,\varphi\rangle] \ra 0\quad\text{as}\quad k\ra\infty.
\]
This suffices to show that \eqref{eq:martingale for mean stationary measures relationship} is a martingale for all $\varphi\in \calD_{\geq 0}(L)$.

It follows from Proposition \ref{prop:exist and uniq of jump-time process stationary distn} that
\begin{equation}\label{eq:expectation of sum of jumps stationary ics}
\expE_{\vec{X}^N_0\sim \psi^N}\Big[\frac{1}{N}\sum_{\tau_n\leq 1}\langle m_{\tau_n}^N,\varphi\rangle\Big]=\lambda_N\varpi^N(\varphi).
\end{equation}

Taking the expectation of \eqref{eq:martingale for mean stationary measures relationship}, using that $\vec{X}_0^N\sim \psi^N$, and applying \eqref{eq:expectation of sum of jumps stationary ics}, we see that
\begin{equation}\label{eq:piN omega N relation by L}
\xi^N(L\varphi)=-\lambda_N \varpi^N(\varphi)\quad\text{for all}\quad \varphi\in \calD_{\geq 0}(L).
\end{equation}

We define $G_t:=\int_0^tP_sds$, which we note is bounded, for all $0\leq t<\infty$. We have that
\begin{equation}\label{eq:Green kernel Gt calculation}
(P_h-P_0)G_t=G_t(P_h-P_0)=\int_h^{t+h}P_sds-\int_0^tP_s ds=\int_t^{t+h}P_sds- \int_0^hP_s ds=(P_t-P_0)G_h.
\end{equation}
It follows from \eqref{eq:Green kernel Gt calculation} that $G_tf\in \calD_{\geq 0}(L)$ for all $t\geq 0$ and $f \in C_0(\Rm_{>0};\Rm_{\geq 0})$, and that
\[
(P_t-P_0)f=\lim_{h\ra 0}(P_t-P_0)\frac{G_hf}{h}=\lim_{h\ra 0}\frac{(P_h-P_0)}{h}G_tf=L(G_tf)\quad\text{for all}\quad f\in C_0(\Rm_{>0};\Rm_{\geq 0}).
\]
It then follows from \eqref{eq:piN omega N relation by L} that
\[
\xi^N(f-P_tf)=-\xi^N((P_t-P_0)
f)=-\xi^N(LG_tf)=\lambda_N\varpi^N(G_tf)\quad\text{for all}\quad f\in C_0(\Rm_{>0};\Rm_{\geq 0}).
\]
Taking the monotone limit as $t\ra\infty$, we see that $\lambda_N\varpi^N(Gf)=\xi^N(f)\leq \lvert\lvert f\rvert\rvert_{\infty}$ for all $f\in C_0(\Rm_{>0};\Rm_{\geq 0})$. It follows from the monotone convergence theorem that $(\lambda_N\varpi^NG)(1)=\xi^N(1)$. Since $\xi^N$ is a probability measure, it follows that $\lambda_N\varpi^NG$ is also a probability measure, which satisfies $\xi^N(f)=(\lambda_N\varpi^NG)(f)$ for all $f\in C_0(\Rm_{>0};\Rm)$. We conclude the proof of Proposition \ref{prop:relation between piN and varpiN} by applying the Riesz-Markov-Kakutani representation theorem.

Alternatively, we could have proven Proposition \ref{prop:relation between piN and varpiN} by way of a folklore proposition, which is stated and proven in \cite[Proposition 3.1]{Benaim2023}.
\end{proof}

Having established the tightness of $\{\Upsilon^N\}$ in $\calP(\calP(\Rm_{\geq 0 }))$ and of $\{\varpi^N\}$ in $\calP(\Rm_{\geq 0 })$, we would like to establish tightness in $\calP(\calP(\Rm_{> 0 }))$ and $\calP(\Rm_{> 0 })$ respectively, which requires preventing mass from accumulating at $0$. We accomplish this with the following proposition.

\begin{prop}\label{prop:subsequential limits apply mass to R+ only}
We have that $\{\Upsilon^N:N\geq 12\}$ is tight in $\calP(\calP(\Rm_{>0}))$ and $\{\varpi^N:N\geq 12\}$ is tight in $\calP(\Rm_{>0})$.
\end{prop}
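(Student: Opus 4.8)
The plan is to reduce to a boundary estimate and then control the behaviour near $0$ by exploiting that a particle close to the boundary is killed before it can wander, together with the Green identity $\xi^N=\lambda_N\varpi^N G$ of Proposition \ref{prop:relation between piN and varpiN}.

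Tightness of $\{\varpi^N\}$ in $\calP(\Rm_{\geq 0})$ away from $+\infty$ is essentially already in hand: since $X_t+t$ is a martingale for \eqref{eq:BM with drift -1}, optional stopping gives $G1(x)=\expE_x[\tau_{\partial}]=x$, so $\lambda_N\varpi^N(\mathrm{id})=\xi^N(1)=1$; Lemma \ref{lem:liminf of lambda N} gives $\lambda_N^{-1}\le\iota_N/\lambda_{\min}$, hence $\sup_{N\ge12}\varpi^N(\mathrm{id})<\infty$, and with $\liminf_N\lambda_N\ge\lambda_{\min}>0$ this rules out escape of $\varpi^N$-mass to $+\infty$. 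It therefore remains to prove $\lim_{\delta\downarrow0}\sup_{N\ge12}\varpi^N((0,\delta])=0$; the statement for $\{\Upsilon^N\}$ in $\calP(\calP(\Rm_{>0}))$ then follows from Markov's inequality, since $\expE\big[\Upsilon^N(\{\nu:\nu((0,\delta])>\epsilon\})\big]\le\epsilon^{-1}\varpi^N((0,\delta])$.

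The first ingredient is that the occupation of a thin boundary strip is of order $\delta^2$, uniformly in the starting point: from $G1(x)=x$ and an explicit computation of the boundary-layer occupation time for \eqref{eq:BM with drift -1} (one finds $\expE_\delta[\Leb\{s<\tau_{\partial}:X_s<\delta\}]=\delta-\tfrac12(1-e^{-2\delta})\le\delta^2$, and a particle started above $\delta$ must pass through $\delta$ before entering $(0,\delta)$), one gets $\|G\Ind_{(0,\delta]}\|_{\infty}\le C\delta^2$ for small $\delta$. By Proposition \ref{prop:relation between piN and varpiN},
\[
\xi^N((0,\delta])=\lambda_N\,\varpi^N\!\big(G\Ind_{(0,\delta]}\big)\le C\,\lambda_N\,\delta^2\qquad\text{for all }N\ge12\text{ and all small }\delta>0 ,
\]
so $\xi^N$ is uniformly thin near $0$ relative to $\lambda_N$. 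To convert this into smallness of $\varpi^N((0,\delta])$ itself one must test $\xi^N$ and $\varpi^N$ against functions that do not vanish at the boundary — the relation $\xi^N(L\varphi)=-\lambda_N\varpi^N(\varphi)$ from the proof of Proposition \ref{prop:relation between piN and varpiN} only sees $\varphi$ with $\varphi(0)=0$ and so cannot detect mass escaping to $0$. The fix is to run the same martingale argument for $\langle m^N_t,\varphi\rangle$ under $\psi^N$ with a general bounded $\varphi\in C^2$, obtaining a boundary-corrected identity of the shape $\xi^N(\tfrac12\varphi''-\varphi')=-\lambda_N(\varpi^N(\varphi)-\varphi(0))$, and then to test it against a barrier $\varphi_\delta$ which is superharmonic for $\tfrac12\partial^2-\partial$ on $(0,\delta)$ (a smoothed, truncated version of $x\mapsto\tfrac12(e^{2x}-1)\wedge\tfrac12(e^{2\delta}-1)$, so that $\tfrac12\varphi_\delta''-\varphi_\delta'\le0$ on $(0,\delta)$ with positive part supported near $\delta$); combining with the bound $\xi^N((0,\delta])\le C\lambda_N\delta^2$ and $\inf_N\lambda_N>0$ should then leave the desired uniform control of $\varpi^N((0,\delta])$.

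The main obstacle is precisely the uniformity in $N$ of this last step: the manipulations carry a factor $\lambda_N$ throughout, and closing the estimate amounts to a uniform one-sided control of the density of $\xi^N$ at the boundary — equivalently, a uniform bound on the density of $\varpi^N$ at $0$ — that has to be extracted from the soft inputs $\inf_N\lambda_N>0$ and $\sup_N\varpi^N(\mathrm{id})<\infty$ alone, without any fine estimate on the particle system. If this proves awkward, an alternative is to argue by contradiction at the level of subsequential limits: were tightness at $0$ to fail there would be $N_k\to\infty$, $\delta_k\downarrow0$ with $\varpi^{N_k}((0,\delta_k])\ge\epsilon$; extracting limits $\varpi^{N_k}\to\varpi$ and $\lambda_{N_k}\to\lambda\in[\lambda_{\min},\infty]$ one would get $\varpi(\{0\})\ge\epsilon$, and the task becomes to contradict either that $\lambda\varpi G$ is a measure of the right total mass when $\lambda<\infty$ (using $\xi^N=\lambda_N\varpi^N G$ and $\xi^N(1)=1$) or, when $\lambda=\infty$, that $\varpi^N(\mathrm{id})=1/\lambda_N\to0$ forces the limiting empirical measure to collapse to $\delta_0$, which is incompatible with the Fleming-Viot dynamics.
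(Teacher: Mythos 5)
Your treatment of tightness at infinity is correct and matches the paper's (the explicit computation $G1(x)=\expE_x[\tau_\partial]=x$ via optional stopping is a nice sharpening of the paper's ``$G1(x)\to\infty$''), and your reduction of the $\Upsilon^N$ statement to the $\varpi^N$ statement via Markov's inequality is fine. The genuine gap is the boundary estimate $\lim_{\delta\downarrow 0}\sup_N\varpi^N((0,\delta])=0$, which is the entire content of the proposition, and none of the three routes you sketch closes it. The structural obstruction is that the Green kernel annihilates mass at the origin: $\delta_0G=0$, so the identity $\xi^N=\lambda_N\varpi^NG$ together with $\xi^N(1)=1$ and $\inf_N\lambda_N>0$ is perfectly consistent with $\varpi^N\to\tfrac12\delta_0+\tfrac12\mu$ and $\lambda_N\to 2/\mu(\mathrm{id})$ — the surviving half of the mass simply carries all of $\xi^N$. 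This kills both your boundary-corrected-generator route (to conclude from $\xi^N(L_0\varphi_\delta)=-\lambda_N(\varpi^N(\varphi_\delta)-\varphi_\delta(0))$ you would need a quantitative control of $\xi^N$, or of the flux, near $0$ that is equivalent to what you are proving; your bound $\xi^N((0,\delta])\le C\lambda_N\delta^2$ points the wrong way, since $\xi^N$ being thin near $0$ is automatic and says nothing about $\varpi^N$) and your contradiction route in the case $\lambda<\infty$ (there is no contradiction with $\lambda\varpi G$ having total mass $\lambda\varpi(\mathrm{id})=1$ when $\varpi$ has an atom at $0$). You acknowledge the uniformity problem yourself, but it is not a technical awkwardness — soft manipulations of the stationary identities cannot see an atom forming at the boundary.

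What is actually needed, and what the paper supplies, is a genuine probabilistic estimate on the particle system: the paper reduces the proposition, via the jump-time identity $\expE_{\psi^N}\bigl[\tfrac1N\sum_{\tau_n\le1}\Ind(m^N_{\tau_n}(B(0,\delta))>\epsilon)\bigr]=\lambda_N\Upsilon^N(\{m:m(B(0,\delta))>\epsilon\})$, to Lemma \ref{lem:bound on number of jumps while mass all at the boundary}, which says that for suitable $\delta$ the left-hand side vanishes as $N\to\infty$. Its proof couples the particles from below by independent reflected Brownian motions with drift $-1$, shows via the supermartingale $t\mapsto f(\vec Y_t)-C't$ with $f(\vec y)=|\vec y|^{-1}$ in $\Rm^4$ that a block of four particles is unlikely to be simultaneously small, applies Cramér's theorem to make the event $\{m^N_t(B(0,\delta))>\epsilon\}$ exponentially unlikely in $N$, and separately bounds the number of jumps that can occur while the running maximum is small. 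Some estimate of this kind (ruling out a macroscopic fraction of particles sitting near $0$ at jump times) is indispensable; your proposal does not contain one.
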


\begin{proof}[Proof of Proposition \ref{prop:subsequential limits apply mass to R+ only}]
For all $\epsilon,\delta>0$ we have from Proposition \ref{prop:exist and uniq of jump-time process stationary distn} that
\[
\expE_{\vec{X}^N_0\sim \psi^N}\Big[\frac{1}{N}\sum_{\tau_n\leq 1}\Ind[m^N_{\tau_n}(B(0,\delta))>\epsilon]\Big]=\lambda_N\Upsilon^N(\{m:m(B(0,\delta))>\epsilon\}).
\]
It therefore suffices to establish the following lemma.
\begin{lem}\label{lem:bound on number of jumps while mass all at the boundary}
For all $\epsilon>0$ there exists $\delta>0$ such that 
\begin{equation}
\expE_{\vec{X}^N_0\sim \psi^N}\Big[\frac{1}{N}\sum_{\tau_n\leq 1}\Ind(m^N_{\tau_n}(B(0,\delta))>\epsilon)\Big]\ra 0\quad\text{as}\quad N\ra\infty.
\end{equation}
\end{lem}
The proof of this lemma requires a crude estimate. We defer its proof to the appendix.

This concludes the proof of Proposition \ref{prop:subsequential limits apply mass to R+ only}.
\end{proof}

For $\mu\in \calP(\Rm_{> 0})$ and $y>0$ we define
\begin{equation}\label{eq:definition of Ty}
T_y(\mu):=\inf\{t>0:-\ln \Pm_{\mu}(\tau_{\partial}>t)>y\}.
\end{equation}
We then define the following flows on $\calP(\Rm_{>0})$,
\begin{equation}
\begin{split}
\theta_t:\calP(\Rm_{>0})\ni \mu\mapsto \Law_{\mu}(X_t\lvert \tau_{\partial}>t)\in \calP(\Rm_{>0}),\quad t\geq 0,\\
\vartheta_y:\calP(\Rm_{>0})\ni \mu\mapsto \Law_{\mu}(X_{T_y(\mu)}\lvert \tau_{\partial}>T_y(\mu))\in \calP(\Rm_{>0}),\quad y\geq 0.
\end{split}
\end{equation}
It is well-known and easy to check that $\theta_t\circ \theta_s=\theta_{t+s}$ for $t,s\geq 0$. Similarly we have 
\begin{equation}\label{eq:vartheta flow}
\vartheta_y\circ \vartheta_z=\vartheta_{y+z}\quad\text{and}\quad T_{y+z}(\mu)=T_z(\vartheta_y(\mu))+T_y(\mu) \quad\text{for}\quad y,z\geq 0 \quad\text{and}\quad\mu\in\calP(\Rm_{>0}).
\end{equation}
\begin{proof}[Proof of \eqref{eq:vartheta flow}]
We fix arbitrary $y,z\geq 0$ and $\mu\in\calP(\Rm_{>0})$. We define $t_0:=T_y(\mu)$ and $t_1:=T_z(\vartheta_y(\mu))$. Then we have that
\[
\Pm_{\mu}(\tau_{\partial}>t_0+t_1)=\Pm_{\Law_{\mu}(X_{t_0}\lvert \tau_{\partial}>t_0)}(\tau_{\partial}>t_1)\Pm_{\mu}(\tau_{\partial}>t_0)=\Pm_{\vartheta_y(\mu)}(\tau_{\partial}>t_1)e^{-y}=e^{-(z+y)}.
\]
It follows that $T_{y+z}(\mu)=T_y(\mu)+T_z(\vartheta_y(\mu))$. We then have that 
\[
\vartheta_{y+z}(\mu)=\theta_{T_{y+z}(\mu)}(\mu)=\theta_{t_0+t_1}(\mu)=\theta_{t_1}\circ\theta_{t_0}(\mu)=\theta_{T_z(\vartheta_y(\mu))}\circ \vartheta_y(\mu)=\vartheta_z\circ \vartheta_y(\mu).
\]
\end{proof}
We further define
\begin{equation}
T^N_y:=\inf\{t>0:J^N_t>y\}=\tau^N_{\lfloor Ny+1\rfloor},\quad 0\leq y<\infty.
\end{equation}

The following characterises subsequential limits of stationary empirical measures as invariant measures of these flows.
\begin{prop}\label{prop:subsequential limits are invariant measures}
We suppose that the stationary empirical measure of the $N$-particle Fleming-Viot process (respectively the $N$-particle jump-time process), $\chi^N\in \calP(\calP(\Rm_{>0}))$ (respectively $\Upsilon^N\in \calP(\calP(\Rm_{>0}))$), converges in $\calP(\calP(\Rm_{>0}))$ along a subsequence to $\chi\in \calP(\calP(\Rm_{>0}))$ (respectively $\Upsilon\in \calP(\calP(\Rm_{>0}))$). Then $\chi$ (respectively $\Upsilon$) is invariant under the flow $(\theta_t)_{t\geq 0}$ (respectively $(\vartheta_y)_{y\geq 0}$). 
\end{prop}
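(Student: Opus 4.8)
The plan is to prove the two cases in parallel, since they are formally identical once we pass to the right time parametrisation. Recall that $\chi^N = \Theta^N_\# \psi^N$ is the law of the empirical measure under the $N$-particle Fleming-Viot stationary distribution. By stationarity of $\psi^N$ under the $N$-particle flow, for any fixed $t\geq 0$ the random measure $m^N_t$ (started from $\psi^N$) has the same law as $m^N_0$, i.e. law $\chi^N$. On the other hand, by Villemonais' hydrodynamic limit theorem (Theorem 2.2 of \cite{Villemonais2011}), applied conditionally on the (random) initial empirical measure $m^N_0$, the map $m^N_0 \mapsto m^N_t$ is, in the $N\to\infty$ limit, exactly the deterministic map $\theta_t$. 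So heuristically, pushing $\chi^N$ forward by the $N$-particle evolution for time $t$ gives something that converges to $\theta_{t\,\#}\chi$, but it also equals $\chi^N \to \chi$; hence $\theta_{t\,\#}\chi = \chi$. The same argument with jumps in place of time, using the stationarity of $\phi^N$ under the jump-time kernel $\bfQ$ (Proposition \ref{prop:exist and uniq of jump-time process stationary distn}) and noting that $T^N_y = \tau^N_{\lfloor Ny+1\rfloor}$ is a deterministic number of jumps, gives $\vartheta_{y\,\#}\Upsilon = \Upsilon$.

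Concretely I would proceed as follows. Fix $t\geq 0$ (resp. $y\geq 0$) and a bounded continuous test function $F:\calP(\Rm_{>0})\to\Rm$; it suffices to show $\int F\,d(\theta_{t\,\#}\chi) = \int F\,d\chi$ (resp. with $\vartheta_y$, $\Upsilon$). First, run the $N$-particle system from $\psi^N$ (resp. run the jump-time chain from $\phi^N$ for $\lfloor Ny+1\rfloor$ steps): by stationarity, $\expE[F(m^N_t)] = \expE[F(m^N_0)] = \int F\,d\chi^N$ (resp. $\expE[F(m^N_{T^N_y})] = \int F\,d\Upsilon^N$, using that after $k$ jumps the empirical measure $\Theta^N(\vec Y_k)$ is distributed as $\Upsilon^N$). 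Second, condition on $\calF_0 = \sigma(\vec X^N_0)$: I want $\expE[F(m^N_t)\mid \calF_0] \to F(\theta_t(m^N_0))$ in a sense strong enough to pass to the limit. This is where Villemonais' theorem enters — but it is stated for a fixed (deterministic) initial $\mu$, so I need a version with random initial data. I would obtain this by a subsequence/Skorokhod-representation argument: along the subsequence on which $\chi^N \to \chi$, realise $m^N_0 \to $ (a $\chi$-distributed) $\mu$ almost surely; then for a.e. such realisation, apply the hydrodynamic limit with the (now deterministic, $N$-dependent but convergent) initial empirical measures $m^N_0$ to get $m^N_t \to \theta_t(\mu)$ in probability. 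One checks the hydrodynamic limit theorem is robust to letting the initial condition vary with $N$ as long as it converges — this is standard and already implicit in its statement ($m^N_0 \to \mu$ weakly in probability is the hypothesis, and a convergent deterministic sequence certainly satisfies it). Third, combine: $\int F\,d\chi = \lim_N \int F\,d\chi^N = \lim_N \expE[F(m^N_t)] = \expE[F(\theta_t(\mu))] = \int F\circ\theta_t\,d\chi$, using bounded convergence together with continuity of $\theta_t$ (which I should record, or circumvent by noting $F\circ\theta_t$ is bounded and we only need convergence in distribution of $m^N_t$ to $\theta_{t\,\#}(\text{law of }\mu)$).

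For the jump-time case the structure is the same: $T^N_y$ is a deterministic number of jumps $\lfloor Ny+1\rfloor \sim Ny$, and I claim that observing the hydrodynamic limit after $\sim Ny$ jumps yields the flow $\vartheta_y$ rather than $\theta_t$. The point is that the number of jumps accumulated by the hydrodynamic-limit process by time $t$ is, in the large-$N$ limit, $-\ln \Pm_\mu(\tau_\partial > t)$ (this is the content of the mass-loss computation behind \eqref{eq:rho equals prob of survival}–\eqref{eq:liminf lambda N lemma mass of rhot expression}: $(1-\frac1N)^{NJ^N_t} \approx e^{-J^N_t}$ tracks the surviving mass, which tends to $\Pm_\mu(\tau_\partial > t)$), so requiring $J^N_t \approx y$ means $t \approx T_y(\mu)$ as defined in \eqref{eq:definition of Ty}, and $m^N_{T^N_y} \to \theta_{T_y(\mu)}(\mu) = \vartheta_y(\mu)$. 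I would make this precise by combining the hydrodynamic convergence $m^N_t \to \theta_t(\mu)$ (uniformly on compact time intervals, or at least jointly with $J^N_t \to -\ln\Pm_\mu(\tau_\partial > t)$) with continuity/strict monotonicity of $t\mapsto -\ln\Pm_\mu(\tau_\partial>t)$, so that the random time $T^N_y$ converges to the deterministic $T_y(\mu)$.

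The main obstacle is the second step: upgrading Villemonais' hydrodynamic limit from deterministic to (convergent) random/varying initial data and, more delicately in the jump-time case, controlling the joint convergence of $(m^N_t, J^N_t)$ well enough to evaluate it at the random stopping time $T^N_y$. The empirical-measure convergence is standard, but identifying $\lim_N J^N_{T^N_y}$-type quantities requires knowing the jump-counting process converges together with the empirical measure and that the limiting inverse $y \mapsto T_y(\mu)$ is well-behaved; some care is also needed because a priori a subsequential limit $\mu$ could be a point mass at an arbitrary point of $\Rm_{>0}$, for which $T_y(\mu)$ is still finite and the flow well-defined, so no pathology arises there, but one must make sure the Skorokhod coupling does not push mass to $0$ or $\infty$ in the limit — which is exactly what Proposition \ref{prop:subsequential limits apply mass to R+ only} rules out.
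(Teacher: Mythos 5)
Your proposal is correct and follows essentially the same route as the paper: start the particle system from its stationary law, apply a hydrodynamic limit with (tight, random) initial data to identify the limit of $m^N_t$ (resp.\ $m^N_{T^N_y}$) as $\theta_t(m_0)$ (resp.\ $\vartheta_y(m_0)$), and compare with stationarity to conclude $(\theta_t)_{\#}\chi=\chi$ (resp.\ $(\vartheta_y)_{\#}\Upsilon=\Upsilon$). The one obstacle you flag --- upgrading Villemonais' theorem to convergent random initial conditions and to joint convergence of $(m^N_t,J^N_t)$ evaluated at the random time $T^N_y$ --- is exactly what the paper outsources to \cite[Theorem 2.10]{Tough2022} (restated as Theorem \ref{theo:tight ic hydrodynamic limit theorem}), rather than re-deriving it from \cite{Villemonais2011} as you sketch.
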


\begin{proof}[Proof of Proposition \ref{prop:subsequential limits are invariant measures}]
We  establish Proposition \ref{prop:subsequential limits are invariant measures} by applying a hydrodynamic limit theorem. Villemonais \cite[Theorem 2.2]{Villemonais2011} has established a hydrodynamic limit theorem for the Fleming-Viot process under general conditions. However, this theorem is not in the form we need here. Whilst we could adapt \cite[Theorem 2.2]{Villemonais2011} to put it into the requisite form, \cite[Theorem 2.10]{Tough2022} includes the present setting and is already in the requisite form. A statement of \cite[Theorem 2.10]{Tough2022}, Theorem \ref{theo:tight ic hydrodynamic limit theorem}, can be found in Appendix \ref{appendix:tight ic hydrodynamic limit theorem}.

We consider along our given subsequence Fleming-Viot proesses $(\vec{X}^N_t)_{t\geq 0}$ with stationary initial conditions $\vec{X}^N_0\sim \psi^N$. We write $m^N_t$ for the corresponding empirical measures $m^N_t:=\Theta^N(\vec{X}^N_t)$. We fix $t\geq 0$. We denote by $m_0$ a random variable with distribution $m_0\sim \chi$, so that $m^N_0$ converges in $\calP(\Rm_{>0})$ in distribution to $m_0\sim \chi$. It follows from Theorem \ref{theo:tight ic hydrodynamic limit theorem} that $m^N_t$ converges in $\calP(\Rm_{>0})$ in distribution to $\theta_t(m_0)$. Since $\vec{X}^N_0\sim \psi^N$, however, we have that $\vec{X}^N_t\sim \psi^N$ for all $N$, so that $m^N_t$ also converges in $\calP(\Rm_{>0})$ in distribution to a random variable with distribution $\chi$. Therefore $\Law(\theta_t(m_0))=\chi$, so that $(\theta_t)_{\#}\chi=\chi$. Since $t\geq 0$ is arbitrary, we conclude the proof that $\chi$ is $(\theta_t)_{t\geq 0}$-invariant.

The proof for subsequential limits of $\Upsilon^N$ follows in the same manner. We instead take $\vec{X}^N_0\sim \phi^N$ and fix $y\geq 0$. Again writing $m_0\sim \Upsilon$ for a random variable which is a limit in distribution of our initial empirical measure, Theorem \ref{theo:tight ic hydrodynamic limit theorem} ensures that $m^N_{T^N_y}$ converges in $\calP(\Rm_{>0})$ in distribution to $\vartheta_y(m_0)$. Since $T^N_y= \tau^N_{\lfloor Ny+1\rfloor}$ and $\phi^N$ is stationary for the jump-time process, we conclude as before that $\Upsilon=(\vartheta_y)_{\#}\Upsilon$, for arbitrary $y\geq 0$.
\end{proof}

We have established the tightness of $\{\Upsilon^N:N\geq 12\}$ in $\calP(\calP(\Rm_{>0}))$, and characterised subsequential limits as invariant measures of the flow $(\vartheta_y)_{y\geq 0}$. We will need more information to uniquely characterise subsequential limits, however. The requisite extra information is provided by the following proposition.
\begin{prop}\label{prop:killing rate subsequential limits of stopped process}
We suppose that the stationary empirical measure of the $N$-particle jump-time process, $\Upsilon^N\in \calP(\calP(\Rm_{>0}))$, converges in $\calP(\calP(\Rm_{>0}))$ along a subsequence to $\Upsilon\in \calP(\calP(\Rm_{>0}))$. Then $\Upsilon(T_1)=\int_{\calP(\Rm_{>0})}T_1(m)\Upsilon(dm)\leq T_1(\pi_{\min})$. Moreover if along this subsequence we have $\limsup_{N\ra\infty}\lambda_N>\lambda_{\min}$, then $\Upsilon(T_1)<T_1(\pi_{\min})$.
\end{prop}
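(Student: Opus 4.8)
The plan is to relate $T_1$ to the real-time duration of $N$ consecutive jumps of the $N$-particle jump-time process, to compute the expectation of that duration under $\phi^N$ exactly in terms of $\lambda_N$, and then to pass to the limit using $\liminf_N\lambda_N\ge\lambda_{\min}$ from Lemma~\ref{lem:liminf of lambda N}. Throughout I work along the given subsequence and I write $m_0\sim\Upsilon$ for a random measure which is the distributional limit of the initial empirical measures $m^N_0:=\Theta^N(\vec X^N_0)$, $\vec X^N_0\sim\phi^N$. The first step is the identity
\[
\expE_{\phi^N}\big[T^N_1\big]=\frac{N+1}{N\lambda_N}.
\]
Since $T^N_1=\tau^N_{N+1}=\sum_{k=0}^{N}(\tau^N_{k+1}-\tau^N_k)$ and $\phi^N$ is stationary for the jump-time chain (so $\vec X^N_{\tau^N_k}\sim\phi^N$ for every $k$), the Markov property gives $\expE_{\phi^N}[\tau^N_{k+1}-\tau^N_k]=\expE_{\phi^N}[\tau^N_1]$ for every $k$, hence $\expE_{\phi^N}[T^N_1]=(N+1)\expE_{\phi^N}[\tau^N_1]$. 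To evaluate $\expE_{\phi^N}[\tau^N_1]$ I would apply the ergodic theorem to the stationary ergodic sequence of inter-jump durations (the jump-time chain is ergodic by the uniqueness in Proposition~\ref{prop:exist and uniq of jump-time process stationary distn}), getting $\tfrac1n\tau^N_n\to\expE_{\phi^N}[\tau^N_1]$ almost surely (the limit a priori in $[0,\infty]$); on the other hand $J^N_{\tau^N_n}=n/N$ combined with $\tfrac{J^N_t}{t}\to\lambda_N$ almost surely (Theorem~\ref{theo:ergodicity for fixed N}) forces $\tfrac1n\tau^N_n\to\tfrac1{N\lambda_N}$, and comparing the two limits yields $\expE_{\phi^N}[\tau^N_1]=\tfrac1{N\lambda_N}$.

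The second step identifies the distributional limit of $T^N_1$ as that of $T_1(m_0)$. Exactly as in the proof of Proposition~\ref{prop:subsequential limits are invariant measures}, I would apply the hydrodynamic limit theorem~\ref{theo:tight ic hydrodynamic limit theorem} to the Fleming--Viot processes with stationary initial law $\phi^N$: since $m^N_0\Rightarrow m_0$, it provides convergence of the renormalised jump-count process $J^N_\cdot$ to the profile $t\mapsto-\ln\Pm_{m_0}(\tau_\partial>t)$ (this is consistent with the identity $\Pm_{m^N_0}(\tau_\partial>t)=\expE_{\vec X^N_0}[e^{-\iota_N J^N_t}]$ from the proof of Lemma~\ref{lem:liminf of lambda N} as $\iota_N\to1$, and it is the same use of the theorem by which the proof of Proposition~\ref{prop:subsequential limits are invariant measures} obtains $m^N_{T^N_y}\Rightarrow\vartheta_y(m_0)$). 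For every $\mu\in\calP(\Rm_{>0})$ the map $t\mapsto-\ln\Pm_{\mu}(\tau_\partial>t)$ is continuous, strictly increasing, vanishes at $t=0$ and diverges as $t\to\infty$ (the killing time of $X$ from $\mu$ has no atom, gives positive mass to every nonempty open interval, and is almost surely finite), so the level-$1$ first-passage times converge,
\[
T^N_1=\inf\{t:J^N_t>1\}\ \Longrightarrow\ \inf\{t:-\ln\Pm_{m_0}(\tau_\partial>t)>1\}=T_1(m_0),
\]
by~\eqref{eq:definition of Ty}.

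The third step passes to the limit. As $T^N_1\ge0$ and $T^N_1\Rightarrow T_1(m_0)$, Fatou's lemma for weakly convergent nonnegative random variables gives
\[
\Upsilon(T_1)=\expE[T_1(m_0)]\le\liminf_{N\to\infty}\expE_{\phi^N}[T^N_1]=\liminf_{N\to\infty}\frac{N+1}{N\lambda_N}\le\frac1{\lambda_{\min}},
\]
the last step by Lemma~\ref{lem:liminf of lambda N}; since $\Pm_{\pi_{\min}}(\tau_\partial>t)=e^{-\lambda_{\min}t}$ gives $T_1(\pi_{\min})=1/\lambda_{\min}$, this is the first assertion. For the ``moreover'' part, if $\limsup_N\lambda_N>\lambda_{\min}$ along the subsequence I would pass to a further subsequence along which $\lambda_N\to\lambda_\ast>\lambda_{\min}$; the first two steps are unaffected, so $\Upsilon(T_1)\le\lim_N\frac{N+1}{N\lambda_N}=1/\lambda_\ast<1/\lambda_{\min}=T_1(\pi_{\min})$.

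I expect the main obstacle to be the second step: one must extract from the hydrodynamic limit theorem the convergence of the whole jump-count process $J^N_\cdot$ --- equivalently, of the real passage time $T^N_1$ to $T_1(m_0)$ --- and not merely the single-time statement $m^N_t\Rightarrow\theta_t(m_0)$, and combine it with the elementary but essential continuity and strict monotonicity of $t\mapsto-\ln\Pm_{\mu}(\tau_\partial>t)$ that keep the level-$1$ crossing stable in the limit. The Palm-type identity of Step~1 and the Fatou argument of Step~3 are routine.
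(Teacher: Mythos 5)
Your proposal is correct and follows essentially the same route as the paper: the identity $\expE_{\phi^N}[T^N_1]=\frac{N+1}{N\lambda_N}$ obtained by combining Birkhoff's theorem for the stationary inter-jump durations with the almost sure limit $\tau^N_n/n\to\frac{1}{N\lambda_N}$, then the hydrodynamic limit theorem to get $T^N_1\Rightarrow T_1(m_0)$, then Fatou together with Lemma \ref{lem:liminf of lambda N}. The only cosmetic difference is that the paper first truncates the inter-jump times at a level $C$ to establish integrability of $\tau^N_1$ before removing the truncation by monotone convergence, whereas you invoke the ergodic theorem for nonnegative (possibly non-integrable) functions directly; both are valid.
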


\begin{proof}[Proposition \ref{prop:killing rate subsequential limits of stopped process}]
We fix $N\geq 12$ and establish that
\begin{equation}\label{eq:expected value of jump time from jump time stationary}
\expE_{\phi^N}[\tau_1^N]=\frac{1}{N\lambda_N}.
\end{equation}

We note that we have not yet proven that $\tau_1^N$ is integrable, which is included in the statement of \eqref{eq:expected value of jump time from jump time stationary}. It follows from Theorem \ref{theo:ergodicity for fixed N} that
\begin{equation}\label{eq:almost sure convergence of jump times method 1}
\frac{\tau_n^N}{n}\overset{a.s.}{\ra} \frac{1}{N\lambda_N}\quad\text{as}\quad n\ra\infty,\quad\text{for any initial condition.}
\end{equation}
On the other hand, the $N$-particle jump-time process along with the corresponding intervals between jump times, in its stationary distribution $\phi^N$, defines a measure-preserving system, which we denote as $(X,\mu,T)$. We denote the $\sigma$-algebra of $T$-invariant sets as $\mathscr{C}$. It follows from Birkhoff's theorem that
\begin{equation}\label{eq:almost sure convergence of jump times method 2}
\frac{1}{n}\sum_{0\leq k\leq n-1}[(\tau^N_{k+1}-\tau^N_k)\wedge C]\ra \expE[\tau^N_1\wedge C\lvert \mathscr{C}]\quad \mu-\text{almost surely as}\quad n\ra\infty,
\end{equation}
for any $C<\infty$. We note that since the convergence in \eqref{eq:almost sure convergence of jump times method 1} holds for any initial condition, it must hold $\mu$-almost surely. 

Since $\sum_{0\leq k\leq n-1}[(\tau^N_{k+1}-\tau^N_k)\wedge C]\leq \tau_n^N$ for any $C<\infty$, it follows from \eqref{eq:almost sure convergence of jump times method 1} and \eqref{eq:almost sure convergence of jump times method 2} that $\expE[\tau^N_1\wedge C]\leq  \frac{1}{N\lambda_N}$ for any $C<\infty$. It follows from the monotone convergence theorem that $\tau^N_1$ is integrable. We can therefore take $C=+\infty$ in \eqref{eq:almost sure convergence of jump times method 2}. Since $\sum_{0\leq k\leq n-1}[\tau^N_{k+1}-\tau^N_k]= \tau_n^N$ for all $n<\infty$, \eqref{eq:expected value of jump time from jump time stationary} follows from \eqref{eq:almost sure convergence of jump times method 1} and \eqref{eq:almost sure convergence of jump times method 2}

We can identify $T^N_1=\tau^N_{N+1}$, so that 
\[
\expE_{\phi^N}[T^N_1]=\frac{N+1}{N\lambda_N}\quad\text{for all}\quad N\geq 12.
\]

We now take along our given subsequence $N$-particle Fleming-Viot processes $(\vec{X}^N_t)_{t\geq 0}$ with initial conditions $\vec{X}^N_0\sim \phi^N$. It follows from Theorem \ref{theo:tight ic hydrodynamic limit theorem} that $T_1^N\overset{d}{\ra} T_1(m)$, whereby $m$ is a random variable with distribution $m\sim \Upsilon$. Applying Skorokhod's representation theorem, this becomes almost sure convergence on a new common probability space. It then follows by Fatou's lemma and Lemma \ref{lem:liminf of lambda N} that along this subsequence we have
\begin{equation}\label{eq:Fatou lemma usage}
\Upsilon(T_1)=\expE[T_1(m)]=\expE[\liminf_{N\ra\infty}T^N_1]\leq \liminf_{N\ra\infty}\expE[T^N_1]= \frac{1}{\limsup_{N\ra\infty}\lambda_N}\leq \frac{1}{\lambda_{\min}}=T_1(\pi_{\min}),
\end{equation}
with a strict inequality if $\limsup_{N\ra\infty}\lambda_N>\lambda_{\min}$.

This concludes the proof of Proposition \ref{prop:killing rate subsequential limits of stopped process}.
\end{proof}

We now consider such a subsequential limit $\Upsilon\in \calP(\calP(\Rm_{>0}))$ of $\{\Upsilon^N:N\geq 12\}$. Since $\Upsilon$ is $\vartheta_1$-invariant by Propositon \ref{prop:subsequential limits are invariant measures}, we have that
\[
\vartheta_1:\calP(\Rm_{>0})\ra \calP(\Rm_{>0})
\]
is a measure-preserving transformation on the probability space $(\calP(\Rm_{>0}),\Upsilon)$, with $T_1\in L^1(\Upsilon)$ by Proposition \ref{prop:killing rate subsequential limits of stopped process}. We write $\mathscr{C}$ for the $\sigma$-algebra of $\vartheta_1$-invariant sets. It follows from Birkhoff's theorem that
\begin{equation}\label{eq:almost sure convergence of Tn}
\frac{1}{n}T_n(m)\overset{\eqref{eq:vartheta flow}}{=}\frac{1}{n}\sum_{0\leq k<n}T_1(\vartheta_k(m))\ra \expE[T_1\lvert \mathscr{C}](m)\quad\text{as}\quad n\ra\infty,\quad \Upsilon\text{-almost surely.}
\end{equation}
We define $F:=\{m\in\calP(\Rm_{>0}):\lim_{n\ra\infty}\frac{1}{n}T_n(m)\leq T_1(\pi_{\min})\}\in\mathscr{C}$. If $m\in F$, then
\[
\limsup_{n\ra\infty}\frac{1}{n}\inf\{t>0:\Pm_{m}(\tau_{\partial}>t)<e^{-n}\}\leq T_1(\pi_{\min})=\frac{1}{\lambda_{\min}},
\] 
from which it follows that
\[
\limsup_{t\ra\infty}\frac{1}{t}\ln\Pm_{m}(\tau_{\partial}>t)\leq -\lambda_{\min}.
\]
It therefore follows from Theorem \ref{theo:domain of attraction of minimal QSD} that 
\begin{equation}
\vartheta_n(m)\overset{\TV}{\ra} \pi_{\min} \quad\text{as}\quad n\ra\infty \quad\text{for all} \quad m\in F. 
\end{equation}
Since $\Upsilon$ is $\vartheta_1$-invariant and $F\in \mathscr{C}$, it follows that $m=\pi_{\min}$ for $\Upsilon$-almost every $m\in F$. We see that $\expE[T_1\lvert \mathscr{C}]=T_1(\pi_{\min})$ on $F$. Using Proposition \ref{prop:killing rate subsequential limits of stopped process}, we therefore have that
\[
\begin{split}
T_1(\pi_{\min})\geq \expE[T_1]=\expE[\expE[T_1\lvert \mathscr{C}](m)\Ind(m\in F)]+\expE[\expE[T_1\lvert \mathscr{C}](m)\Ind(m\in F^c)]\\
=T_1(\pi_{\min})\Upsilon(F)+\expE[\expE[T_1\lvert \mathscr{C}](m)\Ind(m\in F^c)].
\end{split}
\]
Therefore 
\[
T_1(\pi_{\min})\Upsilon(F^c)=\expE[\expE[T_1\lvert \mathscr{C}](m)\Ind(m\in F^c)].
\]
On the other hand, by \eqref{eq:almost sure convergence of Tn} and the definition of $F$ we have that $\expE[T_1\lvert \mathscr{C}](m)>T_1(\pi_{\min})$ for $\Upsilon$-almost every $m\in F^c$, so that $\Upsilon(F^c)=0$. 

We conclude that $\Upsilon(\pi_{\min})=1$ so that $\Upsilon=\delta_{\pi_{\min}}$. Moreover it follows that $\Upsilon(T_1)=T_1(\pi_{\min})$, so that along our given subsequence we have $\limsup_{n\ra\infty}\lambda_{N}\leq \lambda_{\min}$ by Proposition \ref{prop:killing rate subsequential limits of stopped process}. Since our choice of convergent subsequence is arbitrary, it follows (using Lemma \ref{lem:liminf of lambda N}) that along the whole sequence we have
\begin{equation}\label{eq:convergence of varpiN}
\varpi^N\ra \pi_{\min}\quad\text{in}\quad \calP(\Rm_{>0})\quad\text{in probability and}\quad \lambda_N\ra \lambda_{\min}\quad \text{as}\quad N\ra\infty. 
\end{equation}

We write $\calM_{V}(\Rm_{>0})$ for the space of signed, finite Borel measures on $\Rm_{>0}$ equipped with the vague topology (induced by $C_c(\Rm_{>0})$ test functions). It follows from Proposition \ref{prop:relation between piN and varpiN}, \eqref{eq:convergence of varpiN}, and the fact that $G(C_c(\Rm_{>0}))\subseteq C_b(\Rm_{>0})$, that we have
\[
\xi^N=\lambda_N\varpi^NG\ra \lambda_{\min}\pi_{\min}G=\pi_{\min} \quad\text{in}\quad \calM_{V}(\Rm_{>0})\quad\text{as}\quad N\ra\infty. 
\]
Since there is no loss of mass - i.e. $\chi^N$ for $N\geq 12$ and $\pi_{\min}$ are all probability measures - it follows that 
\begin{equation}\label{eq:piN converges to pi}
\xi^N\ra \pi_{\min}\quad\text{in}\quad \calP(\Rm_{>0})\quad \text{as}\quad N\ra\infty.
\end{equation}
In particular, $\{\xi^N:N\geq 12\}$ is tight in $\calP(\Rm_{>0})$, hence $\{\chi^N:N\geq 12\}$ is tight in $\calP(\calP(\Rm_{>0}))$. 

We write $\chi$ for an arbitrary subsequetial limit in $\calP(\calP(\Rm_{>0}))$ of $\chi^N$. Then $\chi$ is $(\theta_t)_{t\geq 0}$-invariant by Proposition \ref{prop:subsequential limits are invariant measures}, and $\pi_{\min}$ is the mean measure of $\chi$ by \eqref{eq:piN converges to pi},
\[
\expE_{m\sim\chi}[m(\cdot)]=\pi_{\min}(\cdot).
\]
Since
\[
\expE_{m\sim\chi}\Big[\int_{\Rm_{>0}}e^{ux}m(dx)\Big]=\pi_{\min}(e^{ux})<\infty\quad\text{for all}\quad u<1,
\]
it follows that
\[
\int_{\Rm_{>0}}e^{ux}m(dx)<\infty\quad\text{for all $u<1$, for $\chi$-almost every $m\in \calP(\Rm_{>0})$}.
\]
Therefore $\chi$-almost every $m\in\calP(\Rm_{>0})$ is contained in the domain of attraction of $\pi_{\min}$ by Theorem \ref{theo:domain of attraction of minimal QSD}. Since $\chi$ is $(\theta_t)_{t\geq 0}$-invariant, it follows that $\chi=\delta_{\pi_{\min}}$.

This concludes the proof of Theorem \ref{theo:main theorem}.
\qed

\begin{appendix}
\section{Proof of Theorem \ref{theo:domain of attraction of minimal QSD}}\label{appendix:domain of attraction proof}

The implication that \eqref{eq:Martinez condition for domain of attraction} implies \eqref{eq:convergence to minimal QSD} is \cite[Theorem 1.3]{Martinez1998}. The equivalence of \eqref{eq:Martinez condition for domain of attraction} and \eqref{eq:condition for domain of attraction of minimal QSD} is immediate. To see that \eqref{eq:convergence to minimal QSD} implies \eqref{eq:limsup of survival prob bded} we calculate
\[
\frac{1}{n}\ln\Pm_{\mu}(\tau_{\partial}>n)=\frac{1}{n}\sum_{m<n}\ln\Pm_{\mu}(\tau_{\partial}>m+1\lvert \tau_{\partial}>m).
\]
Then since \eqref{eq:convergence to minimal QSD} implies that $\Pm_{\mu}(\tau_{\partial}>m+1\lvert \tau_{\partial}>m)\ra e^{-\lambda(\pi_{\min})}=e^{-\frac{1}{2}}$ as $m\ra\infty$, \eqref{eq:limsup of survival prob bded} follows.

Finally, we show that \eqref{eq:limsup of survival prob bded} implies \eqref{eq:condition for domain of attraction of minimal QSD}. We fix arbitrary $\epsilon\in (0,\frac{1}{4})$. It follows from \eqref{eq:limsup of survival prob bded} that
\[
\Pm_{\mu}(\tau_{\partial}>t)\leq e^{(-\frac{1}{2}+\epsilon)t}\quad \text{for all $t$ large enough.}
\]
Then defining $z:=\frac{1}{2}-2\epsilon$, it follows that $\expE_{\mu}[e^{z\tau_{\partial}}]<\infty$. It therefore follows from \cite[(1.4)]{Martinez1998} that
\[
\int_{\Rm_{>0}}e^{(1-2\sqrt{\epsilon})x}\mu(dx)=\int_{\Rm_{>0}}e^{-x(\sqrt{1-2z}-1)}\mu(dx)=\expE_{\mu}[e^{z\tau_{\partial}}]<\infty.
\]
Since $\epsilon\in (0,\frac{1}{4})$ is arbitrary, we obtain \eqref{eq:condition for domain of attraction of minimal QSD}.
\qed

\section{Proof of Theorem \ref{theo:ergodicity for fixed N} and Lemma \ref{lem:bound on number of jumps while mass all at the boundary}}\label{appendix:proof of N-particle ergodicity and jump bound}

We will begin by proving Theorem \ref{theo:ergodicity for fixed N} except for its final part, namely the assertions that $\lambda_N<\infty$ for $N\geq 12$ and that the convergence in \eqref{eq:almost sure convergence of lambda N fixed N} becomes $L^p$ convergence whenever the initial condition is deterministic and $1\leq p<\frac{1}{2}\lfloor \frac{N}{4}\rfloor$. We will then prove Lemma \ref{lem:bound on number of jumps while mass all at the boundary}. Finally we will conclude with the proof of the final part of Theorem \ref{theo:ergodicity for fixed N}.

\subsection{Proof of \eqref{eq:convergence in TV of Law fixed N} and \eqref{eq:almost sure convergence of lambda N fixed N}}

We fix $2\leq N<\infty$. For $10\leq H<\infty$ to be determined we consider the $N$-particle Fleming-Viot process $\vec{X}_t:=(X^1_t,\ldots,X^N_t)$ over the time interval $[0,H]$, suppressing the $N$ superscript as $N$ is fixed. We write ${\bf P}_t$ for the time $t$ transition kernel of the Fleming-Viot $N$-particle system, for $t\geq 0$. We claim that for some choice of $H<\infty$ there exists $\gamma>0$, $c_0\in (0,1)$ and $K<\infty$ such that
\begin{equation}\label{eq:Harris theorem condition}
{\bf P}_HV(\vec{X})\leq c_0V(\vec{X})+K\quad\text{for all}\quad \vec{X}\in \Rm_{>0}^N,
\end{equation}
whereby we define
\[
V(\vec{X})=V((X^1,\ldots,X^N)):=e^{\gamma \max(X^1,\ldots,X^N)}.
\]
\begin{proof}[Proof of \eqref{eq:Harris theorem condition}]
We denote the driving Brownian motions of $X^1_t,\ldots,X^N_t$ by $W^1_t,\ldots,W^N_t$ respectively. We define
\[
U^i_t:=X^i_t+t,\quad 1\leq i\leq N,\quad \bar U_t:=\max\{U^1_t,\ldots,U^N_t\}.
\]
We note that the path of $\bar U_t$ is continuous and follows the paths of $\{U^i_t\}$, at any time following the path of the maximal $U^i_t$. Moreover $U^i_t$ satisfies $dU^i_t=dW^i_t$ between the jump times of $X^i_t$. 

We fix $y>x:=\bar U_0$ for the time being and take $z\in (x,y)$ to be determined. For some $M<\infty$ such that 
\[
x<z<y-M(N+5)
\] 
we assume for contradiction that:
\begin{enumerate}
\item\label{enum:assum for contradiction bounded oscillation}
the oscillations of the driving Brownian motions are bounded by $M$, $\sup_{1\leq i\leq N}[\sup_{t\leq H}W^i_t-\inf_{t\leq H}W^i_t]\leq M$;
\item\label{enum:assum for contradiction at least z}
no particle with $U^i_t\geq z$ for some $0\leq t\leq H$ then dies in the time interval $[t,H]$.
\end{enumerate}

We take $i_0\in \{1,\ldots,N\}$ such that $U^{i_0}_H=\bar U_H$. Then for some $t_0\leq H$, particle $X^{i_0}$ must have died at time $t_0$ and jumped onto a particle $X^{i_1}$ with $U^{i_1}_{t_0}\geq y-M$, since otherwise $U^{i_0}_{t}$ would have travelled from strictly below $y-M$ at some time $t'<H$ say (note that $U^i_0\leq \bar U_0= x<z<y-M$), to at least $y$ at time $H$ without $X^{i_0}$ being killed, which would then imply $\lvert W^{i_0}_H-W^{i_0}_{t'}\rvert >M$. The time $t_0$ must have been the first time that $U^{i_0}_t\geq z$, by \ref{enum:assum for contradiction at least z}. 

We then proceed inductively, tracing $U^{i_n}_t$ backwards from time $t_{n-1}$ until the time $t_n$ at which $X^{i_n}$ jumped onto some particle $X^{i_{n+1}}$ with $U^{i_{n+1}}_{t_n}\geq y-(n+1)M$. This must have been the first time that $U^{i_n}_t\geq z$, again by \ref{enum:assum for contradiction at least z}, as long as $n\leq N+3$.

We terminate our induction procedure when either we reach time $0$ or $n=N+2$. We set the $n$ at which we terminate to be $n_{\max}$. We see that the path we obtain cannot go below $y-(n_{\max}+2)M>\bar U_0$, so we must have terminated when $n_{\max}=N+2$. On the other hand, since for each $i_n$ the time $t_n$ was the first time that $U^{i_n}\geq z$, it follows that there can be no repeated indices, so we cannot have $n_{\max}=N+2$. This is a contradiction.

It follows that if we have \ref{enum:assum for contradiction bounded oscillation} and \ref{enum:assum for contradiction at least z}, then $\bar U_H\leq y$. Moreover, we observe that if $z>M+H$ then \ref{enum:assum for contradiction bounded oscillation} implies \ref{enum:assum for contradiction at least z}. Therefore if we have $z>0$ and $M<\infty$ such that
\begin{equation}\label{eq:sufficient condition for M}
\bar U_0\vee (M+H) < z<y-M(N+5)<y,
\end{equation}
then \ref{enum:assum for contradiction bounded oscillation} implies $\bar U_H<y$, so that $\bar U_H\geq y$ implies \ref{enum:assum for contradiction bounded oscillation}$^c$ (the complement of the event defined by \ref{enum:assum for contradiction bounded oscillation}). 

We fix the initial condition $\vec{X}_0\in \Rm_{>0}^N$; in the following none of the constants depend upon our choice of $\vec{X}_0\in \Rm_{>0}^N$. We can bound the probability of \ref{enum:assum for contradiction bounded oscillation}$^c$ explicitly by the reflection principle,
\[
\Pm(\ref{enum:assum for contradiction bounded oscillation}^c)\leq 4N\Phi(\frac{M}{2\sqrt{H}}),
\]
whereby $\Phi(a):=\Pm(N(0,1)\geq a)$. We see that if $y>x\vee H$, then
\begin{equation}\label{eq:M(y) formula}
M=M(x,y):=\frac{y-(x\vee H)}{5N}\leq \Big(\frac{y-H}{N+6}\Big)\wedge \Big(\frac{y-x}{N+5}\Big)
\end{equation}
suffices to provide for \ref{eq:sufficient condition for M}. We note that $M(x,y)=M(x\vee H,y)$. Therefore we have
\begin{equation}
\Pm(\bar U_H\geq y)\leq 4N\Phi\Big(\frac{M(\bar U_0\vee H,y) }{2\sqrt{H}}\Big)\quad \text{for all}\quad y\geq \bar U_0\vee H.
\end{equation}

We now define $\bar X_t:=\max(X^1_t,\ldots,X^N_t)$, and note that $\bar X_0=\bar U_0$, $\bar X_H=\bar U_H-H$. It follows that
\begin{equation}
\Pm(\bar X_H\geq y-H)\leq 4N\Phi\Big(\frac{M(\bar X_0\vee H,y) }{2\sqrt{H}}\Big)\quad \text{for all}\quad y\geq \bar X_0\vee H.
\end{equation}
We take $\gamma=\frac{2\ln 2}{H}>0$ (so that $e^{-\frac{\gamma H}{2}}=\frac{1}{2}$) and $5\leq C<\infty$ to be determined. We calculate 
\begin{equation}\label{eq:first bound for e to gamma H}
\begin{split}
\expE[e^{\gamma \bar X_H}]\leq \expE[e^{\gamma \bar X_H}\Ind(\bar X_H\leq \bar X_0-\frac{H}{2})]\\+\expE[e^{\gamma \bar X_H}\Ind(\bar X_0-\frac{H}{2}<\bar X_H\leq \bar X_0+(C+5)H)]+\expE[e^{\gamma \bar X_H}\Ind(\bar X_H>\bar X_0+(C+5)H)]\\
\leq \frac{1}{2}e^{\gamma (\bar X_0\vee H)}+4^{C+5}e^{\gamma(\bar X_0\vee H)}\Pm(\bar X_H\geq \bar X_0-\frac{H}{2}) +\expE[e^{\gamma \bar X_H}\Ind(\bar X_H>\bar X_0+(C+5)H)].
\end{split}
\end{equation}

We assume for the time being that $\bar X_0\geq H$ and seek to bound the right hand side of \eqref{eq:first bound for e to gamma H}. In the following, $B<\infty$ and $b>0$ are constants which are uniform in $H$, and which may increase (respectively decrease) from line to line. Using the bound $\Phi(a)\leq \frac{1}{a}e^{\frac{-a^2}{2}}$ for $a\geq 1$, we calculate that
\[
\Pm(\bar X_H\geq \bar X_0-\frac{H}{2})\leq B\Phi\Big(\frac{M(\bar X_0,\bar X_0+\frac{H}{2})}{2\sqrt{H}}\Big)\leq \frac{B\sqrt{H}}{M(\bar X_0,\bar X_0+\frac{H}{2})}e^{-b\frac{[M(\bar X_0,\bar X_0+\frac{H}{2})]^2}{ H}}\\
\leq \frac{B}{\sqrt{H}}e^{-b H }.
\]
We see that for $r>0$ sufficiently small, defining $C=C(H):=rH$ gives that
\[
4^{C+5}\Pm(\bar X_H\geq \bar X_0-\frac{H}{2})<\frac{1}{10}
\]
for all $H$ large enough. We fix such an $r$, thereby defining $C=C(H)$.

We calculate that
\[
\begin{split}
\expE[e^{\gamma \bar X_H}\Ind(\bar X_H>\bar X_0+(C+5)H)]\leq \sum_{n\geq 0}e^{\gamma [\bar X_0+(C+5)H+n+1]}\Pm(\bar X_H\geq \bar X_0+(C+5)H+n)\\
\leq \int_{\bar X_0+(C+2)H}^{\infty}e^{10\gamma H}e^{\gamma y}\Pm(\bar X_H\geq y-H)dy\leq B\int_{\bar X_0+(C+2)H}^{\infty}e^{\gamma y}\Phi\Big(\frac{M(\bar X_0,y)}{2\sqrt{H}}\Big)dy\\
\leq B\sqrt{H}\int_{\bar X_0+(C+2)H}^{\infty}\frac{e^{\gamma y}e^{-\frac{b(M(\bar X_0,y))^2}{H}}}{M(\bar X_0,y)} dy\leq \frac{B}{H^{\frac{3}{2}}}\int_{\bar X_0+(C+2)H}^{\infty} e^{\gamma y}e^{-\frac{b(y-\bar X_0)^2}{H}}  dy\\
\leq  \frac{B}{ H^{\frac{3}{2}}}e^{\gamma \bar X_0}\int_{rH^2}^{\infty}e^{\gamma z}e^{-\frac{bz^2}{ H }}dz=\frac{B}{ \sqrt{H}}e^{\gamma \bar X_0}\int_{rH }^{\infty}e^{\gamma H v}e^{-bHv^2 }dv\leq \frac{B}{ \sqrt{H}}e^{\gamma \bar X_0}\int_{rH }^{\infty}e^{(B-bHv)v  }dv.
\end{split}
\]

We conclude from \eqref{eq:first bound for e to gamma H} that for all $H$ sufficiently large we have $\expE[e^{\gamma \bar X_H}]\leq \frac{3}{4}e^{\gamma \bar X_0}$ if $\bar X_0\geq H$. If $\bar X_0\leq H$, we similarly conclude that $\expE[e^{\gamma \bar X_H}]\leq \frac{3}{4}e^{\gamma H}$. We therefore obtain \eqref{eq:Harris theorem condition}.
\end{proof}

We take $c_0$ and $K$ as given by \eqref{eq:Harris theorem condition}. We take $R>\frac{2K}{1-c_0}$ and $\mathscr{C}:=\{\vec{X}\in \Rm_{>0}^N:V(\vec{X})\leq R\}$. We claim that there exists $\alpha>0$ and $\nu\in \calP(\Rm_{>0})$ such that
\begin{equation}\label{eq:Doeblin condition for Harris theorem}
{\bf P}_H(\vec{X},\cdot)\geq \alpha \nu(\cdot)\quad\text{for all}\quad \vec{X}\in \mathscr{C}.
\end{equation}

We assume that $\vec{X}_0\in \mathscr{C}$. We recall that $\bar X_t:=\max(X^1_t,\ldots,X^N_t)$ and define $\hat{X}_t:= \sum_{i=1}^NX^i_t$. We further define $B:=\frac{\ln R}{\gamma}$. Then $\hat X_0\leq NB$ whenever $\vec{X}_0\in \mathscr{C}$. We take $0<\epsilon<B$ and define $\tau_{\epsilon}:=\inf\{t>0:\bar X_t\geq \epsilon\}$. We see that $\hat X_t$ satisfies $d\hat X_t=-Ndt+\sqrt{N}d\hat{W}_t+dL_t$ for some Brownian motion $\tilde{W}_t$ and non-decreasing process $L_t$. We see that if $\hat{W}_{\frac{H}{3}}-\hat{W}_0\geq \frac{NH}{3}+N\epsilon$, then $\hat{X}_{\frac{H}{3}}\geq N\epsilon$, so that $\bar X_{\frac{H}{3}}\geq \epsilon$, hence $\tau_{\epsilon}\leq \frac{H}{3}$. Therefore there is a probability bounded away from $0$ that $\tau_{\epsilon}\leq \frac{H}{3}$.

In this case, we must have $\bar{X}_{\tau_{\epsilon}}\in [\epsilon,B]$ and $\vec{X}_{\tau_{\epsilon}}\in [0,B]^N$. There is then a probability uniformly bounded away from $0$ that from time $\tau_{\epsilon}$ to time $\frac{2H}{3}$ the maximal particle ($X^{1}$ say) stays in $[\frac{\epsilon}{2},2B]$, whilst all other particles die exactly once, jump onto $X^1$, and stay within $[\frac{\epsilon}{3},3B]$. In this case all particles are contained in $[\frac{\epsilon}{3},3B]$ at time $\frac{2H}{3}$. We then obtain \eqref{eq:Doeblin condition for Harris theorem} from the parabolic Harnack inequality.

It therefore follows from Harris' ergodic theorem \cite[Theorem 1.2]{Hairer2011} that ${\bf P}_H$ has a unique stationary distribution, $\psi\in \calP(\Rm_{>0}^N)$, such that ${\bf P}_{nH}(\upsilon,\cdot)\ra \psi(\cdot)$ in total variation as $n\ra\infty$, for all $\upsilon\in \calP(\Rm_{>0}^N)$. We fix arbitrary $t\geq 0$. Since $\psi$ is the unique stationary distribution for ${\bf P}_H$, which commutes with $ {\bf P}_t$, it follows from $(\psi {\bf P}_t)=\psi{\bf P}_H{\bf P}_t=(\psi{\bf P}_t){\bf P}_H$ that $\psi {\bf P}_t=\psi$. Therefore $\psi$ is stationary for $( {\bf P}_t)_{t\geq 0}$, and we have that
\[
\upsilon{\bf P}_t=(\upsilon {\bf P}_{\lfloor \frac{t}{H}\rfloor H}-\psi){\bf P}_{t-\lfloor \frac{t}{H}\rfloor H}+\psi {\bf P}_{t-\lfloor \frac{t}{H}\rfloor H}\overset{TV}{\ra} \psi\quad\text{as}\quad t\ra\infty,
\]
for all $\upsilon \in \calP(\Rm_{>0}^N)$. We have therefore established \eqref{eq:convergence in TV of Law fixed N}.

We obtain \eqref{eq:almost sure convergence of lambda N fixed N} for some $\lambda_N\in [0,\infty]$ from \eqref{eq:convergence in TV of Law fixed N} and Birkhoff's ergodic theorem. We obtain that $\lambda_N>0$ simply by observing that, started from the stationary distribution, the expected number of jumps in time $1$ must be strictly positive.
\qed

\subsection{Proof of Lemma \ref{lem:bound on number of jumps while mass all at the boundary}}

We begin by identifying
\begin{equation}\label{eq:shift time horizon by 1 stationary}
\expE_{\vec{X}^N_0\sim \psi^N}\Big[\frac{1}{N}\sum_{\tau_n\leq 1}\Ind(m^N_{\tau_n}(B(0,\delta))>\epsilon)\Big]=\expE_{\vec{X}^N_0\sim \psi^N}\Big[\frac{1}{N}\sum_{1\leq \tau_n\leq 2}\Ind(m^N_{\tau_n}(B(0,\delta))>\epsilon)\Big].
\end{equation}
Our goal will be to control the latter. We fix arbitrary $\epsilon>0$ and take $\delta>0$ to be determined.

We firstly define $\bar X^N_t:=\max_{1\leq i\leq N}X^{N,i}_t$. We then define
\[
\begin{split}
\hat{J}^{N,k}_t:=\sum_{\tau_n\leq t}\Ind(2^{-k}\leq \bar X^N_{\tau_n}),\quad k\geq 0.
\end{split}
\]
We observe that we are not renormalising the number of jumps by $N$ in the above quantity, and that the maximum $\bar X^N_t$ cannot change at jump times. We further define the stopping times
\[
\tau^N_{\delta}:=\inf\{t\geq 1:m^N_t(B(0,\delta))>\epsilon\}, \quad \delta>0,\quad \tau^N_k:=\inf\{t\geq 1:\bar X^N_{t}< 2^{-k+1}\},\quad k\geq 1.
\]
We now take the bound
\[
\sum_{1\leq \tau_n^N\leq 2}\Ind[m^N_{\tau_n}(B(0,\delta))>\epsilon]\leq\sum_{k\geq 1} (\hat{J}^{N,k}_{2}-\hat{J}^{N,k}_{\tau^N_k})\Ind(\tau^N_k\leq 2)+(\hat{J}^{N,0}_{2}-\hat{J}^{N,0}_{\tau^N_{\delta}})\Ind(\tau^N_{\delta}\leq 2).
\]
If a particle dies while $\bar X^N_t\geq 2^{-k}$, there is a $\frac{1}{N-1}$ probability that it jumps onto the maximal particle. If that happens, in order for it to then die again within time $2^{-2k-1}$, its driving Brownian motion must travel a distance at least $2^{-k-1}$. Thus there is a probability $p>0$, uniform in $k$, that this does not happen. Therefore the number of deaths of a given particle in time $2^{-2k-1}$ while $\bar X^N_t\geq 2^{-k}$ is stochastically dominated by $1+\text{Geom}(\frac{p}{N-1})$. We decompose the interval $[1,2]$ into $2^{2k+1}$ subintervals, and take this bound on each subinterval and all $N$ particles. We conclude that there exists $C<\infty$ such that 
\begin{equation}\label{eq:number of jumps conditional on stopping time controls}
\expE[\hat{J}^{N,k}_{2}-\hat{J}^{N,k}_{\tau^N_k}\lvert \tau^N_k\leq 2]\leq CN^22^{2k}\quad\text{for all}\quad  k\geq 1,\quad \expE[\hat{J}^{N,0}_{2}-\hat{J}^{N,0}_{\tau^N_{\delta}}\lvert \tau^N_{\delta}\leq 2]\leq CN^2.
\end{equation}
Therefore we conclude that 
\begin{equation}\label{eq:bound on expectation jumps in ball estimate}
\expE_{\vec{X}^N_0\sim \psi^N}\Big[\frac{1}{N}\sum_{1\leq \tau_n\leq 2}\Ind[m^N_{\tau_n}(B(0,\delta))>\epsilon]\Big]\leq CN^2\sum_{k\geq 0} 2^{2k}\Pm(\tau^N_k\leq 2)+CN^2\Pm(\tau^N_{\delta}\leq 2).
\end{equation}

Our goal is to establish controls on the probabilities in \eqref{eq:bound on expectation jumps in ball estimate}. To this end we construct the following coupling. We denote as $W^{N,1}_t,\ldots,W^{N,N}_t$ the driving Brownian motions of $X^{N,1}_t,\ldots,X^{N,N}_t$ respectively. Using these Brownian motions, we then take strong solutions (which exist by \cite[Theorem 2.1]{Lions1984}) of
\begin{equation}\label{eq:SDE for process Y}
dY^{N,i}_t=-dt+dW^{N,i}_t+dL^{N,i}_t,\quad 0\leq t<\infty,\quad Y^{N,i}_0=0,\quad 1\leq i\leq N,
\end{equation}
where $L^{N,i}_t$ is the local time of $Y^{N,i}_t$ at $0$. Thus $Y^{N,i}_t$ is a Brownian motion with drift $-1$, reflected at $0$. Therefore $\vec{Y}^{N}=(Y^{N,1},\ldots,Y^{N,N})$ and $\vec{X}^{N}$ are coupled so that
\[
X^{N,i}_t\geq Y^{N,i}_t\quad\text{for all}\quad 0\leq t<\infty,\quad 1\leq i\leq N.
\]
We note in particular that $Y^{N,1}_t,\ldots,Y^{N,N}_t$ are jointly independent. 

We now establish that there exists $C<\infty$ and $q<1$ such that we have
\begin{equation}\label{eq:bound on tkN prob}
\Pm(\tau_k^N\leq 2)\leq [C2^{-k}\wedge q]^{\lfloor\frac{N}{4}\rfloor},\quad \text{for all}\quad k\geq 1,\quad N\geq 4.
\end{equation}

We define for $N\geq 4$,
\[
D^{N,\ell}_t:=\max(Y^{N,4\ell-3}_t,Y^{N,4\ell-2}_t,Y^{N,4\ell-1}_t,Y^{N,4\ell}_t),\quad \bar D^{N,\ell}:=\inf_{1\leq t\leq 2}D^{N,\ell}_t,\quad 1\leq \ell\leq \lfloor \frac{N}{4}\rfloor.
\]
We observe that
\begin{equation}\label{eq:bound on tkN and tepsilonN probabilities}
\begin{split}
\{\tau^N_{k }\leq 2\}\subseteq \{\bar D^N_{\ell}\leq 2^{-k+1}\quad\text{for all}\quad 1\leq \ell\leq \lfloor\frac{N}{4}\rfloor\},\quad \text{for all}\quad k\geq 1.
\end{split}
\end{equation}

It follows from the joint independence of $\{\bar D^{N,\ell}:1\leq \ell\leq \lfloor \frac{N}{4}\rfloor\}$ that establishing the existence of $C<\infty$ and $q<1$ such that for all $N\geq 4$ and $k\geq 0$ we have
\begin{equation}\label{eq:bound on prob of max D small event}
\Pm(\bar D^{N,\ell}\leq 2^{-k})\leq C2^{-k}\wedge q,\quad 1\leq \ell \leq \lfloor\frac{N}{4}\rfloor,
\end{equation}
suffices to give \eqref{eq:bound on tkN prob}. We will now establish \eqref{eq:bound on prob of max D small event}.

\begin{proof}[Proof of \eqref{eq:bound on prob of max D small event}]
We assume without loss of generality that $\ell=1$. We define 
\[
f(\vec{y})=f(y_1,y_2,y_3,y_4):=\frac{1}{\sqrt{y_1^2+y_2^2+y_3^2+y_4^2}},\quad \vec{y}\in \Rm^4\setminus \{0\}.
\]
We consider $(\vec{Y}_t)_{0\leq t\leq 2}:=(Y^1_t,Y^2_t,Y^3_t,Y^4_t)_{0\leq t\leq 2}$ where $Y^1_t,Y^2_t,Y^3_t,Y^4_t$ are independent copies of the SDE \eqref{eq:SDE for process Y}, with independent driving Brownian motions $W^1_t,W^2_t,W^3_t,W^4_t$ respectively. Since $(\vec{Y}_t)_{0\leq t\leq 2}$ has a bounded $4$-dimensional Lebesgue density at time $1$, we have that $\expE[f(\vec{Y}_0)]<\infty$. We now observe by Ito's formula that
\[
df(\vec{Y}_t)=f^3(\vec{Y}_t)\Big[ \sum_{i=1}^4Y^i_t-\frac{1}{2} \Big]dt- f^3(\vec{Y}_t)\Big(\sum_{i=1}^4Y^i_tdW^i_t\Big)-dL_t,
\]
whereby $L_t$ is some non-decreasing process. It follows that there exists $C'<\infty$ such that $f(\vec{Y}_t)-C't$ is a supermartingale. Moreover we observe that if $\max(y_1,\ldots,y_4)\leq 2^{-k}$ then $f(\vec{y})\geq 2^{k-1}$. Therefore
\begin{equation}
\Pm(\bar D^{N,1}\leq 2^{-k})\leq \Pm(\sup_{1\leq t\leq 2}f(\vec{Y}_t)\geq 2^{k-1})\leq \Pm(\sup_{1\leq t\leq 2}[f(\vec{Y}_t)-C'(t-2)]\geq 2^{k-1})\\
\leq \frac{\expE[f(\vec{Y}_1)]+C'}{2^{k-1}}.
\end{equation}
We therefore have \eqref{eq:bound on prob of max D small event} for all $k$ such that $2^{k-1}>2(C'+\expE[f(\vec{Y}_1)])$. On the other hand, $\Pm(\bar D^{N,1}\leq 1)<1$, so we have \eqref{eq:bound on prob of max D small event} for all $k$.
\end{proof}
Having established \eqref{eq:bound on prob of max D small event}, we have now established \eqref{eq:bound on tkN prob}.

We now seek to show that for some $\delta>0$ there exists $N_0<\infty$ and $\gamma>0$ such that
\begin{equation}\label{eq:bound on tepsilonN prob}
\Pm(\tau^N_{\delta}\leq 2)\leq e^{-\gamma N},\quad \text{for all}\quad N_0\leq N<\infty.
\end{equation}
We take $n_0\in \Nm$ such that $\frac{1}{n_0}<\frac{\epsilon}{10}$, and henceforth assume that $N\geq n_0$ ($N_0$ has not yet been determined). We define $U^{\ell}_t$ to be the value of the second smallest of $Y^{n_0(\ell-1)+1}_t,\ldots,Y^{n_0\ell}_t$ for $1\leq \ell\leq \lfloor\frac{N}{n_0}\rfloor$. Then we have that
\[
\sup_{1\leq t\leq 2}m^N_t(B(0,\delta))\leq \frac{1}{n_0}+\frac{n_0}{N}+ \frac{n_0}{N}\sum_{\ell=1}^{\lfloor\frac{N}{n_0}\rfloor}\Ind(\inf_{1\leq t\leq 2}U_t^{\ell}\leq \delta).
\]
Since no two particles can hit the boundary at the same time, we can choose $\delta>0$ such that $\Pm(\inf_{1\leq t\leq 2}U_t^{\ell}\leq \delta)\leq \frac{\epsilon}{10}$. We therefore obtain \eqref{eq:bound on tepsilonN prob} by Cramer's theorem.

Therefore, inputting \eqref{eq:bound on tkN prob} and \eqref{eq:bound on tepsilonN prob} into \eqref{eq:bound on expectation jumps in ball estimate} we conclude that for some $C,N_0<\infty$ we have for all $N\geq N_0\wedge 4$ that
\[
\expE_{\vec{X}^N_0\sim \psi^N}\Big[\frac{1}{N}\sum_{1\leq \tau_n\leq 2}\Ind[m^N_{\tau_n}(B(0,\delta))>1-\epsilon]\Big]\leq CN^2\sum_{k\geq 1} 2^{2k}[C2^{-k}\wedge q]^{\lfloor\frac{N}{4}\rfloor}+2CN^22e^{-\gamma N}.
\]
This concludes the proof of Lemma \ref{lem:bound on number of jumps while mass all at the boundary}.
\qed

\subsection{Proof of the final part of Theorem \ref{theo:ergodicity for fixed N}}

As in \eqref{eq:shift time horizon by 1 stationary} we have
\[
\lambda_N=\expE_{\vec{X}_0\sim \psi^N}[J^N_1-J^N_0]=\expE_{\vec{X}_0\sim \psi^N}[J^N_2-J^N_1].
\]

We have from \eqref{eq:number of jumps conditional on stopping time controls} and \eqref{eq:bound on tkN prob} that
\[
\expE_{\vec{X}_0\sim \psi^N}[\sum_{1\leq \tau_n\leq 2}\Ind(\max_{1\leq i\leq N}X^{N,i}_{\tau_n}\leq 1)]\leq \sum_{k\geq 0}CN^22^{2k}[C2^{-k}\wedge q]^{\lfloor\frac{N}{4}\rfloor},
\]
for some uniform constant $C<\infty$. It follows that 
\[
\expE_{\vec{X}_0\sim \psi^N}[\sum_{1\leq \tau_n\leq 2}\Ind(\max_{1\leq i\leq N}X^{N,i}_{\tau_n}\leq 1)]<\infty\quad\text{for}\quad N\geq 12.
\]
On the other hand, if a particle dies when the maximum particle is at least $1$, then there is a probability bounded away from $0$ that it does not die again in time $1$. Therefore the number of times a particle is killed over the time horizon $[1,2]$ while the maximum is at least $1$ is stochastically dominated by a $\text{Geom}(p)$ random variable, for some $p\in (0,1)$. It follows that $\lambda_N<\infty$ for $N\geq 12$.

We finally turn to the proof that the convergence in \eqref{eq:almost sure convergence of lambda N fixed N} becomes $L^p$ convergence whenever the initial condition is deterministic and $1\leq p<\frac{1}{2}\lfloor \frac{N}{4}\rfloor$. It suffices to establish that $\sup_{t\geq 1}\lvert \lvert \frac{J^N_t}{t}\rvert\rvert_{L^p}$ is finite for any such $p$, since $L^{p'}$-boundedness for some $p'\in (p,\frac{1}{2}\lfloor \frac{N}{4}\rfloor)$ and almost sure convergence implies $L^p$ convergence.

We note that the controls on the number of jumps between times $1$ and $2$ obtained in the proof of Lemma \ref{lem:bound on number of jumps while mass all at the boundary} don't depend upon the choice of initial condition, so apply for any initial condition and any time interval $[n,n+1]$, for $n\geq 1$. Moreover, since the initial condition is deterministic, we can take the initial condition for the  processes defined in \eqref{eq:SDE for process Y} to instead be $Y^{N,i}_0=\epsilon$ for some $\epsilon>0$ sufficiently small, allowing us to obtain the same controls over the time horizon $[0,1]$. 

Replacing the $L^1$ controls with $L^p$ controls obtained in exactly the same manner, we see that there exists $C<\infty$ and $q<1$ such that
\[
\Big\lvert\Big\lvert \frac{J^N_t}{t}\Big\rvert\Big\rvert_{L^p}\leq C\Big[1+\sum_{k\geq 0}2^{2k}\Big([C2^{-k}\wedge q]^{\lfloor \frac{N}{4}\rfloor}\Big)^{\frac{1}{p}}\Big]\quad\text{for all}\quad 1\leq t<\infty.
\]
This is finite when $\frac{1}{p}\lfloor \frac{N}{4}\rfloor>2$.
\qed

\section{Statement of Theorem \ref{theo:tight ic hydrodynamic limit theorem}}\label{appendix:tight ic hydrodynamic limit theorem}

We equip $\mathcal{P}(\Rm_{>0})$ with the Wasserstein-$1$ metric on $\mathcal{P}$ using the bounded metric $d^1(x,y):= 1 \wedge \lvert x-y\rvert$ on the underlying space $\Rm_{>0}$, which metrises the topology of weak convergence of measures \cite{Gibbs2002}. We write $\mathcal{P}_{\Wah}(\Rm_{>0})$ for this metric space. We then define on $\mathcal{D}([0,\infty);\mathcal{P}_{\Wah}(\Rm_{>0})\times \Rm_{\geq 0})$ the metric
\begin{equation}
\begin{split}
d^{\mathcal{D}}(f,g):=\sum_{T=1}^{\infty}2^{-T}(d_{\mathcal{D}([0,T];\mathcal{P}_{\Wah}(\Rm_{> 0})\times \Rm_{\geq 0})}((f_t)_{0\leq t\leq T},(g_t)_{0\leq t\leq T})\wedge 1).
\end{split}
\label{eq:Skorokhod metric on integer times}
\end{equation}
We can now state the following hydrodynamic limit theorem, \cite[Theorem 2.10]{Tough2022}, which we employ in the proof of Theorem \ref{theo:main theorem}.

\begin{theo}[Theorem 2.10, \cite{Tough2022}]\label{theo:tight ic hydrodynamic limit theorem}
We consider a sequence of Fleming-Viot processes $\{(\vec{X}^N_t)_{t\geq 0}:N\geq 12\}$. We define $(m^N_t)_{t\geq 0}$ and $(J^N_t)_{t\geq 0}$ as in \eqref{eq:jump emp meas processes}, and assume
that $\{\Law(m^N_0):N\geq 12\}$ is a tight family of measures in $\mathcal{P}(\mathcal{P}_{\Wah}(\Rm_{>0}))$. Then $\{\Law((m^N_t,J^N_t)_{0\leq t\leq T})\}$ is a tight family of measures in $\calP((\mathcal{D}([0,\infty);\mathcal{P}_{\Wah}(\Rm_{> 0})\times \Rm_{\geq 0}),d^{\mathcal{D}}))$. Moreover every subsequential limit is supported on
\[
\{(\mathcal{L}_{\mu}(X_t\lvert \tau_{\partial}>t), -\ln\Pm_{\mu}(\tau_{\partial}>t))_{0\leq t<\infty}\in \mathcal{C}([0,\infty);\mathcal{P}_{\Wah}(\Rm_{>0})\times \Rm_{\geq 0}):\mu\in \calP(\Rm_{>0})\}.
\]
\end{theo}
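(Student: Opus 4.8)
The plan is to prove Theorem~\ref{theo:tight ic hydrodynamic limit theorem} by combining a tightness argument in the Skorokhod space with an \emph{explicit} identification of every subsequential limit, rather than through a martingale-problem well-posedness statement. The identification rests on one classical device: if $(\mu_t,\ell_t)_{t\ge0}$ is such a limit, then the exponentially tilted family $\nu_t:=e^{-\ell_t}\mu_t$ should solve, weakly against $C_c^\infty(\Rm_{>0})$ test functions, the (un-normalised) Kolmogorov forward equation of the killed process $(X_t)_{0\le t<\tau_\partial}$ with initial datum $\mu_0$; uniqueness for that linear equation then forces $\nu_t=\Pm_{\mu_0}(X_t\in\cdot,\ \tau_\partial>t)$, whence $\ell_t=-\ln\Pm_{\mu_0}(\tau_\partial>t)$ and $\mu_t=\Law_{\mu_0}(X_t\mid\tau_\partial>t)$, which is exactly the claimed structure.

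Concretely, I would first record, for fixed $\varphi\in C_c^\infty(\Rm_{>0})$, the semimartingale decomposition
\begin{equation}\label{eq:hydro-decomp-proposal}
\langle m^N_t,\varphi\rangle=\langle m^N_0,\varphi\rangle+\int_0^t\langle m^N_s,L\varphi\rangle\,ds+\tfrac{N}{N-1}\int_0^t\langle m^N_{s-},\varphi\rangle\,dJ^N_s+R^{N,\varphi}_t,
\end{equation}
where $L$ is the generator of $(X_t)_{0\le t<\tau_\partial}$ and $R^{N,\varphi}$ is a martingale. The drift term comes from It\^o's formula applied between jumps (there is no boundary contribution since $\varphi$ vanishes near $0$); the jump term takes this clean form because the particle resampled at a jump time sits at $0$ just beforehand, so, using $\varphi(0)=0$, the conditional mean of its displacement in $\langle m^N,\varphi\rangle$ equals $\tfrac1N\cdot\tfrac{1}{N-1}\big(N\langle m^N_{\cdot-},\varphi\rangle-\varphi(0)\big)=\tfrac{1}{N-1}\langle m^N_{\cdot-},\varphi\rangle$ per jump; the Brownian fluctuations and the resampling fluctuations are collected into $R^{N,\varphi}$, whose predictable bracket on $[0,T]$ is of order $(1+J^N_T)/N$. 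Passing to a subsequential limit $(m^N,J^N)\to(\mu,\ell)$ along Skorokhod's representation theorem, I would use that $J^N$ is non-decreasing with jumps of size $1/N$ to conclude that $\ell$ is non-decreasing, continuous and $\ell_0=0$; feeding this back into \eqref{eq:hydro-decomp-proposal} with $R^{N,\varphi}\to0$ gives that $t\mapsto\langle\mu_t,\varphi\rangle$ is continuous and
\[
\langle\mu_t,\varphi\rangle=\langle\mu_0,\varphi\rangle+\int_0^t\langle\mu_s,L\varphi\rangle\,ds+\int_0^t\langle\mu_s,\varphi\rangle\,d\ell_s ,
\]
the passage to the limit in the $dJ^N$-integral being routine because the integrator is non-decreasing with a continuous limit and the integrand is bounded and converges uniformly. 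Integration by parts against $e^{-\ell_t}$ cancels the $d\ell_s$ terms and produces the weak forward equation $\langle\nu_t,\varphi\rangle=\langle\mu_0,\varphi\rangle+\int_0^t\langle\nu_s,L\varphi\rangle\,ds$, after which I would invoke the same localised-in-space uniqueness argument used to establish \eqref{eq:rho equals prob of survival} (via \cite[Corollary~3.5]{Porretta2015a}), together with tightness to exclude loss of mass of $\nu_t$ at $0$ or at $\infty$, to identify $\nu_t$ and hence $(\mu_t,\ell_t)$.

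The genuine work is the tightness, which splits into two pieces. First, a uniform control on the jump count: $\sup_N\expE[J^N_T]<\infty$ together with Aldous's condition $\sup_{\sigma\le T}\expE\big[J^N_{\sigma+\delta}-J^N_\sigma\big]\to0$ as $\delta\downarrow0$ over stopping times $\sigma$, which simultaneously rules out explosion and rules out bursts of killings when many particles cluster near $0$. I would obtain this by comparing the Fleming-Viot system with $N$ independent Brownian motions with drift $-1$ reflected at $0$ so as to keep the maximal particle bounded away from $0$ over $[0,T]$, in the spirit of the coupling in the proof of Lemma~\ref{lem:bound on number of jumps while mass all at the boundary}, which in turn keeps the per-particle killing rate bounded, so that the number of killings in a time window is dominated by a quantity of order $N$ times the window length, uniformly in $N$. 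Second, uniform tightness of $\{m^N_t:t\le T,\ N\}$ as measures on $\Rm_{>0}$: escape to $+\infty$ can be controlled by coupling each coordinate below a Brownian motion with drift $-1$ that is resampled onto the running maximum at the jump times, together with the Lyapunov/Harris-type estimate from the proof of Theorem~\ref{theo:ergodicity for fixed N} applied uniformly over initial conditions in a fixed tight set, while the hypothesised tightness of $\Law(m^N_0)$ and well-posedness \cite[Theorem~3.6]{Villemonais2011} handle the boundary. Given these, the compact containment condition holds for the pair $(m^N,J^N)$, and Aldous's criterion---trivial for the non-decreasing $J^N$, and for $\langle m^N_\cdot,\varphi\rangle$ via \eqref{eq:hydro-decomp-proposal}---yields the asserted tightness.

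I expect the main obstacle to be precisely this tightness, and within it the uniform-in-$N$ control of $J^N$ near the absorbing boundary: it is exactly the kind of delicate particle-system estimate that the proof of Theorem~\ref{theo:main theorem} is designed to circumvent, but it is unavoidable for a hydrodynamic-limit statement at this level of generality, and it is what is carried out in \cite{Tough2022}.
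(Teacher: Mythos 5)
The first thing to note is that the paper does not prove this statement: it is quoted verbatim as \cite[Theorem 2.10]{Tough2022}, and Appendix \ref{appendix:tight ic hydrodynamic limit theorem} is explicitly only a \emph{statement} of the imported result. So there is no internal proof to compare yours against; the only fair comparison is with the general scheme used in \cite{Villemonais2011} and \cite{Tough2022}, and your outline does reproduce that scheme faithfully: the semimartingale decomposition \eqref{eq:hydro-decomp-proposal} is correct (including the $\tfrac{N}{N-1}$ factor and the order-$(1+J^N_T)/N$ bracket of the martingale part), the passage $J^N\to\ell$ with $\ell$ continuous and non-decreasing is right because the jumps of $J^N$ have size $1/N$, the exponential tilting $\nu_t=e^{-\ell_t}\mu_t$ does reduce the limit equation to the linear forward equation, and the localised uniqueness argument via \cite[Corollary 3.5]{Porretta2015a} is the same device the present paper uses to prove \eqref{eq:rho equals prob of survival}. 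Your closing identification $e^{-\ell_t}=\nu_t(1)=\Pm_{\mu_0}(\tau_\partial>t)$ then recovers both components of the claimed limit.

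That said, you have correctly diagnosed but not discharged the real content, which is the tightness. Two points deserve emphasis. First, the uniform-in-$N$ control of $\expE[J^N_{\sigma+\delta}-J^N_\sigma]$ cannot be obtained by simply ``keeping the maximal particle bounded away from $0$'': what must be ruled out is a macroscopic fraction of particles clustering at the boundary and generating order-$N^2$ rebirths in a short window, and the comparison with independent reflected drifted Brownian motions (as in the proof of Lemma \ref{lem:bound on number of jumps while mass all at the boundary}) gives control of the \emph{maximum}, not directly of the cluster near $0$; making this quantitative is precisely the delicate estimate in \cite{Tough2022}. Second, the conclusion that subsequential limits are supported on conditioned laws with $\mu\in\calP(\Rm_{>0})$ (rather than on sub-probability limits losing mass at $0$ or $\infty$) requires propagating the initial tightness in $\calP_{\Wah}(\Rm_{>0})$ forward in time uniformly in $N$, which is a separate argument from the compact containment needed for Skorokhod tightness. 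Your proposal is therefore a correct and well-organised reconstruction of the strategy, but as written it is a proof outline whose two hardest steps are asserted rather than proven; since the paper itself delegates exactly those steps to the cited reference, this is an acceptable state of affairs for the purposes of this paper, but it would not stand alone as a self-contained proof.
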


\end{appendix}

{\textbf{Acknowledgement:}} 
The author is grateful to James Nolen and Denis Villemonais for useful discussions on this problem. This work was partially supported by the EPSRC MathRad programme grant EP/W026899/.
\bibliography{SelectionPrincipleFleming-ViotProcess}
\bibliographystyle{plain}
\end{document}